\author[1,*]{P.A. Browne}
\affil[1]{Department of Meteorology, University of Reading, UK}
\affil[*]{Correspondence to p.browne@reading.ac.uk}
\title{Model error moment estimation via data assimilation}
\makeatletter \AtBeginDocument{ \hypersetup{pdftitle= {\@title},pdfauthor= {\@author}}} \makeatother
\date{\today}
\newcommand{\wtm}{\mbox{$\widetilde{\mu}$}}
\theoremstyle{plain}
\theoremstyle{definition}
\theoremstyle{theorem}
\newtheorem{theorem}{Theorem}
\newtheorem{corollary}[theorem]{Corollary}
\newtheorem{remark}[theorem]{Remark}
\newtheorem{lemma}[theorem]{Lemma}
\begin{document}

\maketitle

\begin{abstract}
Using a dynamical model to make predictions about a system has many sources of error. These can include errors in how the model was initialised but also errors in the dynamics of the model itself.
For many applications in data assimilation, probabilistic forecasting, or model improvement, these model errors need to be known over the timestep of the model, not over a time-averaged period. 
Using a forecast from a state that combines observational information as well as prior information we can gain an approximation to the statistics of the model errors on the timescale of the model that is required.
Here we give bounds on the errors in the estimation of the mean and covariance of the errors in the model equations in terms of the errors made in the state estimation.
This is the first time that such a result has been derived. The result shows to what extent the state estimation must constrain the analysis in order to obtain a specified error on the mean or covariance of the model errors.
This is particularly useful for experimental design as it indicates the necessary information content required in observations of the dynamical system.

 \end{abstract}
\noindent \textbf{Keywords:} model error estimation, data assimilation, analysis forecast, model error covariance estimation

\section{Introduction}
Suppose we have a numerical model, $f$, for a dynamical process:
\begin{equation}
x^{k+1} = f(x^k)
\end{equation}
where $x \in \mathbb{R}^N$ is the state at time indexed by $k$. Then it is almost certain that such a numerical model $f$ will contain errors. 
A large amount of uncertainty quantification has revolved around providing bounds of the accuracy of the numerical scheme used to approximate the underlying mathematical or statistical problem \citep{Owhadi2013,Teckentrup2013}. This ignores one large issue: the underlying mathematical problem does not represent all of the features of the real world system in which one is interested. For example, when modelling wind around a wind farm, drag induced by individual trees/plants will likely not be included. Instead, some approximate homogenised quantity will be used. As another example, when modelling carbon fibre composites, specific manufacturing defects will not be included in the model until measurements of the materials response under various loading conditions are incorporated.

The ubiquitous quote on this issue is from \citet{Box1987}: ``All models are wrong but some are useful''. The problem we  address is how to estimate the distribution of the errors that are made by the mathematical model in simulating a physical system.

To get a measure of how far a mathematical model is deviating from the real world, it is necessary to use observations from the system as an independent source of information. Typically we shall never be able to fully observe a system (in time and/or space) and so one has to be careful how to compare a small set of observations with the larger model of the system. The theoretical framework to do this rigorously is well established: using data assimilation to numerically implement Bayes' theorem \citep[][for example]{Jazwinski1970}.

A physical system that occurs in nature has only one realisation. We refer to that realisation as the \emph{truth} and denote it $x_t$. If we use such a truth with the numerical model of the system we have the following evolution equation,
\begin{equation}\label{eqn:truth}
x^{k+1}_t = f(x^k_t) + \beta^k.
\end{equation}
Here $\beta^k$ is the model error term which is a realisation of a stochastic process which occurs at time $k$. We wish to estimate, through the use of data assimilation, the properties of the distribution that $\beta^k$ follows. The first two moments of this unknown distribution, its mean and covariance, are denoted $\mu$ and $Q$ respectively. Knowledge of $\beta^k$ can be fed back into the dynamical model $f$ as a way to systematically improve the model. For example, directly subtracting $\mu$ from the model would be an example of a bias correction method \citep[e.g.][]{Dee2009}, whereas the sources of these errors can be attributed to components of the model equations \cite{Lang2016} and thus can be corrected to improve the mathematical model.

We use any or all prior information we have, along with observations of the system to obtain a \emph{``best estimate''} of the truth. This is the data assimilation stage and results in what is referred to as the \emph{``analysis''}, $x_a$. There are many different techniques for data assimilation, each which make various assumptions or simplifications to Bayes' theorem. See \citet{Evensen2007} or \citet{law2015} for an overview of the field.

Once we have $x_a$ we define our approximation to the model error at timestep $k$, $\widetilde{\beta}^k$, with the equation
\begin{equation}\label{eqn:jobber}
x^{k+1}_a = f(x^k_a) + \widetilde{\beta}^k.
\end{equation}
This makes no assumptions on how $x_a$ is calculated. However, we can immediately see that one traditional method of data assimilation is inappropriate to estimate the $\beta^k$ term.

\begin{remark}
To estimate $\beta^k$, $x_a$ should not be obtained from strong constraint 4DVar (SC-4DVar).
\end{remark}
In SC-4DVar the analysis at time $0$, $x_a^0$ is given as 
\begin{equation}
x_a^0 = \arg \min_{x^0} ||x^0 - x_b||^2_{B^{-1}} + \sum_{i=1}^m||y_{k_i}-H_{k_i}(f^{k_i}(x^0))||^2_{R_{k_i}^{-1}}.
\end{equation}
Here, $x_b$ is a background estimate for the state at time $0$, assumed to be Gaussian with background error covariance matrix $B$. Observations of the system $y$ are taken at $m$ separate times $k_i$. The observation is related to the system through the observation equation
\[y_{k_i} = H_{k_i}(x_t^{k_i}) + \eta^{k_i},\]
where the observation errors $\eta^{k_i}$ are assumed to be Gaussian with an observation error covariance matrix $R_{k_i}$. Then at subsequent times in the assimilation window, 
\begin{equation}
x^{k+1}_a = f(x^k_a) = f^{k+1}(x_a^0)
\end{equation}
and thus $\widetilde{\beta}^k = 0 \ \forall  k \in \{ 0,\ldots,\tau-1\}$.

\subsection{Usage of the model error covariance matrix $Q$}

Having an estimate for $Q$ is important as it plays a vital role in many modern data assimilation methods. In the variational method of data assimilation known as weak constraint 4DVar (WC-4DVar), in addition to the background and observation terms of SC-4DVar, a third term is introduced to penalise the addition of model error \citep{Jazwinski1970}, which is implicitly assumed to follow a Gaussian distribution.
\begin{equation}
\min_{x^0,\beta^0,\ldots,\beta^{k-1}} ||x^0 - x_b||^2_{B^{-1}} + \sum_{i=1}^m||y_{k_i}-H_{k_i}(x^0,\beta^0,\ldots,\beta^{k_i-1})||^2_{R_{k_i}^{-1}} + \sum_{j=0}^{k-1}||\beta^j||^2_{Q^{-1}}
\end{equation}
In this formulation, $x^0$ is the initial state which the method seeks to find, along with model error terms $\beta^0,\ldots,\beta^{k-1}$.
The first term penalises the difference in the initial state $x_0$ and an initial guess known as the background term $x_b$, weighted by the inverse of the background error covariance matrix $B$. The second term penalises discrepancies between observations of the system $y_{k_i}$ and the model equivalent of the observations at the corresponding time, $H_{k_i}(x^0,\beta^0,\ldots,\beta^{k_i-1})$, all weighted by the appropriate observation error covariance matrix $R_{k_i}$. This notation hides the fact that $H_{k_i}(x^0,\beta^0,\ldots,\beta^{k_i-1})=H_{k_i}(x^{k_i}) = H_{k_i}(f(x^{k_i-1})+\beta^{k_i-1})=H_{k_i}(f(f(x^{k_i-2})+\beta^{k_i-2})+\beta^{k_i-1}) = \ldots $ etc. The final term penalises the introduction of model error at each model timestep $j$, weighted by the model error covariance matrix $Q$.

In particle filters, such as the equivalent weights particle filter \citep{VanLeeuwen2010}, $Q$ is used as part of the proposal density to bring the ensemble closer to the observations. It also appears in the computation of weights associated with each particle. \citet{Browne2015a} showed that the lack of information regarding $Q$ is the limiting factor to applying particle filters in large-scale geophysical systems.

\subsection{Previous work on model error estimation}

Much of the work to estimate $Q$ has been undertaken at the European Centre for Medium-range Weather Forecasting (ECMWF) in the context of implementing a form of WC-4DVar for operational numerical weather forecasting. The lack of knowledge of $Q$ is one of the main reasons that such weak-constraint methods are not operational. \citet{Tremolet2006} notes ``Weak-constraint 4D-Var has never been implemented fully with a realistic forecast model because of the computational cost and because of the lack of information to define the model-error covariance matrix required to solve the problem''.

It has been shown \citep{Tremolet2007} 
that for the atmospheric case, the background error covariance matrix ($B$) is not successful as an approximation to the model-error covariance matrix ($Q$). As an \textit{ad hoc} approximation to $Q$, \citet{Tremolet2007} suggests the following.
At time $k$, get an ensemble of analysis states $\{x_i^k\}$. As each ensemble member should be indistinguishable from the truth, each forecast $f(x_i^k)$ could be considered a ``possible evolution of the atmosphere from the true state''. From this, Tr\'emolet concluded that the differences in these forecast increments could be interpreted as ``an ensemble of possible realisations of model error''. The real world, however, has only one realisation. Hence one needs to compare these forecasts not with themselves, but with the observations instead. 
In this way, one discovers not just where the model diverges based on its initial conditions, but where it has failed to capture the realisation of the underlying stochastic dynamics.

A smoother is a sequential method of data assimilation that gives an estimate of $x_a$ for all $k$, not simply at timesteps where observations are present.
Recently, the concept of using a forecast from a lag-1 smoother estimate to approximate the model error covariances has been investigated \citep{Todling2014}. This work only considers estimating the $Q$  the problem in observation space, and do not look at the full information available to them in state space. This is physically motivated as in the case where only the observed variables of the dynamical system are well constrained, only the projection of $Q$ into the observation space will be accurate.

Such derivations from a smoother trajectory have been investigated by \citet{Lang2016} in the context of parameter and parameterization estimation. When the model error can be attributed to incorrect coefficients in the formulation of the model equations then it has been shown that the first moment of the derived model error can be used to determine the necessary corrections to the parameters used in the numerical model. This can be extended to estimation of parameterizations when the first moment of the model error is regressed onto different functions of the model state vector. However higher order moments have not been investigated.

\section{Error bounds on the estimation of the first two moments of model error}

We now set out on the main path of this paper: to find a bound on the error in the approximation of $\mu$ and $Q$ in terms of the accuracy of $x_a$ in approximating $x_t$. The main assumption we shall make is that the dynamical model $f$ is (Lipschitz) continuous. If the model were not continuous at $x_t^k$ then regardless of how close we approximate this argument, the resulting model trajectory could be wildly different. This would lead to a poor approximation of the model error.

\begin{theorem}\label{thm:beta}Suppose $f$ is Lipschitz with constant $L$ and $|x_t^k - x_a^k| < \frac{\varepsilon}{L+1}\ \forall k$.
Then $|\widetilde{\beta}^k - \beta^k| < \varepsilon$.

\end{theorem}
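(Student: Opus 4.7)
The plan is to work directly from the two defining evolution equations \eqref{eqn:truth} and \eqref{eqn:jobber}, since they are the only objects in play: the difference $\widetilde{\beta}^k - \beta^k$ admits an explicit algebraic expression in terms of the state errors at times $k$ and $k+1$, and the hypotheses give me control on both of these.

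First I would subtract \eqref{eqn:jobber} from \eqref{eqn:truth} (or vice versa) to isolate $\widetilde{\beta}^k - \beta^k$ as a sum of two terms: the discrepancy $f(x_t^k) - f(x_a^k)$ between the model applied to the truth and the model applied to the analysis, and the discrepancy $x_a^{k+1} - x_t^{k+1}$ between the analysis and the truth at the next time level. Next I would apply the triangle inequality to bound the norm of $\widetilde{\beta}^k - \beta^k$ by the sum of the norms of these two terms.

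Then I would bound each of the two pieces. The first piece is handled by the Lipschitz assumption on $f$, giving $|f(x_t^k) - f(x_a^k)| \le L \, |x_t^k - x_a^k| < L\varepsilon/(L+1)$. The second piece is directly bounded by the hypothesis on the analysis error at time $k+1$, giving a bound of $\varepsilon/(L+1)$. Summing produces $L\varepsilon/(L+1) + \varepsilon/(L+1) = \varepsilon$, which is exactly what is claimed; the factor $L+1$ in the hypothesis is designed precisely so that these two contributions combine cleanly.

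There is essentially no obstacle: the argument is two lines of algebra followed by two inequalities, and the constant $\frac{1}{L+1}$ in the hypothesis is what makes the bookkeeping balance. The only small thing to flag is that the bound must hold for the analysis error at both time $k$ and time $k+1$, which is why the hypothesis is stated for all $k$ rather than only at the time of interest.
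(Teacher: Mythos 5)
Your proposal is correct and follows exactly the same route as the paper's own proof: subtract the two defining equations, apply the triangle inequality, bound one term by the Lipschitz property and the other by the analysis-error hypothesis, and let the factor $\frac{1}{L+1}$ make the two contributions sum to $\varepsilon$. Your closing remark about needing the hypothesis at both times $k$ and $k+1$ is a correct and worthwhile observation that the paper leaves implicit.
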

\begin{proof}
$f$ Lipschitz $\implies \exists L>0$ s.t. $|f(a)-f(b)|\le L|a-b| \ \forall a,b$.

\begin{align*}
\beta^k = x_t^{k+1} - f(x_t^k),\\
\widetilde{\beta}^k = x_a^{k+1} - f(x_a^k).
\end{align*}
Hence
\begin{align*}
|\beta^k-\widetilde{\beta}^k| &= |x_t^{k+1}-x_a^{k+1} - f(x_t^k) +f(x_a^k)|\\
&\le |x_t^{k+1}-x_a^{k+1}| + |f(x_t^k) -f(x_a^k)|.
\end{align*}
As $|x_t^n - x_a^n| < \frac{\varepsilon}{L+1}$ then
\begin{align*}
|\beta^k-\widetilde{\beta}^k| &\le \frac{\varepsilon}{L+1} + \frac{\varepsilon L}{L+1} = \varepsilon.
\end{align*}
\end{proof}

The sample mean of the model error, $\mu$, is calculated simply as
\begin{equation}
\mu = \frac{1}{\tau}\sum_{k=1}^\tau \beta^k.
\end{equation}
The approximated sample mean of the model error, $\tilde{\beta}^k$, in similarly calculated as
\begin{equation}
\widetilde{\mu} = \frac{1}{\tau}\sum_{k=1}^\tau \widetilde{\beta}^k.
\end{equation}
Using these definitions and Theorem \ref{thm:beta}, we can show the following bound on the estimation of the mean of the model error distribution.

\begin{corollary}\label{cor:mu}
Suppose $f$ is Lipschitz and $|x_t^k - x_a^k| < \frac{\varepsilon}{L+1}\ \forall k$. Then $|\widetilde{\mu} - \mu| < \varepsilon$.
\end{corollary}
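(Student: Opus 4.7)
The plan is to reduce the bound on $|\widetilde{\mu}-\mu|$ to the pointwise bound already established in Theorem~\ref{thm:beta}. The linearity of the sample mean makes this essentially a one-line estimate once the triangle inequality is applied term-by-term.

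First I would subtract the two defining expressions for the sample means to write
\[
\widetilde{\mu}-\mu = \frac{1}{\tau}\sum_{k=1}^{\tau}\bigl(\widetilde{\beta}^{k}-\beta^{k}\bigr),
\]
then take absolute values and apply the triangle inequality to bring the modulus inside the sum:
\[
|\widetilde{\mu}-\mu| \le \frac{1}{\tau}\sum_{k=1}^{\tau}\bigl|\widetilde{\beta}^{k}-\beta^{k}\bigr|.
\]
At this point I would invoke Theorem~\ref{thm:beta}, whose hypotheses coincide exactly with those of the corollary, to conclude that each summand satisfies $|\widetilde{\beta}^{k}-\beta^{k}|<\varepsilon$. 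Summing $\tau$ such strict bounds and dividing by $\tau$ gives $|\widetilde{\mu}-\mu|<\varepsilon$.

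There is no real obstacle here; the only minor subtlety is preserving the strict inequality after averaging, which is automatic since the strict bound holds for every index $k\in\{1,\dots,\tau\}$ and the sum of finitely many strictly bounded quantities remains strictly bounded by the same constant after normalisation. No additional assumption on $\tau$ or on the sign of the $\widetilde{\beta}^{k}-\beta^{k}$ is required, so the argument fits in a few lines directly after quoting Theorem~\ref{thm:beta}.
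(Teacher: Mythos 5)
Your argument is correct and is essentially identical to the paper's own proof: both write $\widetilde{\mu}-\mu$ as the average of the differences $\widetilde{\beta}^k-\beta^k$, apply the triangle inequality, and invoke Theorem~\ref{thm:beta} on each summand. Nothing further is needed.
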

\begin{proof}
\begin{align}
|\mu - \widetilde{\mu}| &= \left|\frac{1}{\tau}\sum_{k=1}^\tau \beta^k - \frac{1}{\tau}\sum_{k=1}^\tau \widetilde{\beta}^k\right| \\
			&= \frac{1}{\tau}\left|\sum_{k=1}^\tau \beta^k - \sum_{k=1}^\tau \widetilde{\beta}^k\right| \\
			&= \frac{1}{\tau}\left|\sum_{k=1}^\tau (\beta^k - \widetilde{\beta}^k)\right| \\
			&\le \frac{1}{\tau}\sum_{k=1}^\tau \left|\beta^k - \widetilde{\beta}^k\right| \\
			&\le \frac{1}{\tau}\sum_{k=1}^\tau \varepsilon \\
			&= \varepsilon
\end{align}
\end{proof}

We now consider the second moment of the model error distribution. 
The sample covariance of model error is defined and approximated as, respectively,
\begin{equation}
Q = \frac{1}{\tau-1}\sum_{k=1}^\tau (\beta^k - \mu)(\beta^k - \mu)^T \qquad \text{ and } \qquad \widetilde{Q} = \frac{1}{\tau-1}\sum_{k=1}^\tau (\widetilde{\beta}^k - \widetilde{\mu})(\widetilde{\beta}^k - \widetilde{\mu})^T.
\end{equation}

\begin{lemma}\label{lemma:prod}
If $| f - L | < \min\{1,\frac{\varepsilon}{2|M|}\}$ and $|g - M | < \min\{1,\frac{\varepsilon}{2|L|+2}\}$ then $|fg-LM| < \varepsilon$.
\end{lemma}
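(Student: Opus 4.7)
The plan is to use the standard product-difference identity to split $fg - LM$ into two pieces, each of which mixes one small factor (a difference) with one bounded factor.

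First I would write
\[
fg - LM \;=\; f(g - M) \;+\; (f - L)M,
\]
and apply the triangle inequality to obtain $|fg - LM| \le |f|\,|g-M| + |f-L|\,|M|$. The term $|M|$ is already a harmless constant, and $|g-M|$, $|f-L|$ are controlled by hypothesis; the only thing that is not immediately bounded is $|f|$. This is exactly what the ``$\min\{1,\cdots\}$'' clauses in the hypotheses are there for: since $|f - L| < 1$, the reverse triangle inequality gives $|f| < |L| + 1$.

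Then I would substitute the hypothesised bounds. Using $|f| < |L| + 1$ and $|g - M| < \frac{\varepsilon}{2|L|+2}$, the first term is at most $(|L|+1)\cdot\frac{\varepsilon}{2|L|+2} = \frac{\varepsilon}{2}$. Using $|f - L| < \frac{\varepsilon}{2|M|}$, the second term is at most $\frac{\varepsilon}{2|M|}\cdot|M| = \frac{\varepsilon}{2}$. Adding the two gives $|fg - LM| < \varepsilon$, as desired.

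There is no real obstacle here; the only thing to notice is which decomposition of $fg - LM$ to use. The alternative split $(f-L)g + L(g-M)$ forces one to bound $|g|$ via $|g-M|<1$, producing a leftover factor $(|M|+1)/(2|M|)$ that does not collapse to $1/2$ cleanly. Choosing $f(g-M) + (f-L)M$ instead matches the asymmetry of the hypotheses perfectly, so that the ``$+2$'' in $2|L|+2$ cancels the $+1$ from the bound $|f| < |L|+1$. One should also implicitly assume $|M|\neq 0$ for the bound $\tfrac{\varepsilon}{2|M|}$ to make sense; if $M=0$, the hypothesis on $f-L$ is trivially satisfied for any choice, and a separate (easier) argument using only $|f-L|<1$ and $|g|<|M|+1=1$ would apply.
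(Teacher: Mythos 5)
Your proposal is correct and uses exactly the same decomposition $fg-LM = f(g-M)+(f-L)M$ and the same bound $|f|<|L|+1$ as the paper's own proof; the observation about the $M=0$ degenerate case is a small but sensible addition the paper omits.
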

\begin{proof}
By the triangle inequality
\[ |f| - |L| \le |f-L| < 1 \implies |f| < 1 + |L|. \]
Hence
\begin{align}
|fg-LM| &= |f(g-M) + M(f-L)|\\
	&\le |f||g-M| + |M||f-L|\\
	& <  (1+|L|)\frac{\varepsilon}{2|L|+2} + |M|\frac{\varepsilon}{2|M|}\\
	& <  \frac{\varepsilon}{2} + \frac{\varepsilon}{2} = \varepsilon.
\end{align} 
\end{proof}

Introducing a subscript notation where $\cdot_i$ refers to the $i^{\text{th}}$ component of a vector and $\cdot_{ij}$ refers to the $i^{\text{th}},j^{\text{th}}$ component of a matrix, we show the following preliminary result.

\begin{lemma}\label{lemma:q}
Suppose $f$ is Lipschitz with constant $L$ and for all $k \in \{1,\ldots,\tau\}$
\begin{equation}
|\beta^k_i - \widetilde{\beta}^k_i | < \min \left\{ 1, \frac{\varepsilon}{8|\beta^k_j|}\left(\frac{\tau-1}{\tau}\right), \frac{\varepsilon}{8|\mu_j|}\left(\frac{\tau-1}{\tau}\right)\right\},
\end{equation}
\begin{equation}
|\beta^k_j - \widetilde{\beta}^k_j | < \min \left\{ 1, \frac{\varepsilon}{8|\beta^k_i|+8}\left(\frac{\tau-1}{\tau}\right), \frac{\varepsilon}{8|\mu_i|+8}\left(\frac{\tau-1}{\tau}\right)\right\},
\end{equation}
\begin{equation}
|\mu_i - \widetilde{\mu}_i| < \min\left\{1, \frac{\varepsilon}{8|\beta^k_j|}\left(\frac{\tau-1}{\tau}\right), \frac{\varepsilon}{8|\mu_j|}\left(\frac{\tau-1}{\tau}\right)\right\},
\end{equation}
\begin{equation}
|\mu_j - \widetilde{\mu}_j| < \min\left\{1, \frac{\varepsilon}{8|\beta^k_i|+8}\left(\frac{\tau-1}{\tau}\right), \frac{\varepsilon}{8|\mu_i|+8}\left(\frac{\tau-1}{\tau}\right)\right\}.
\end{equation}

Then $|Q_{ij} - \widetilde{Q}_{ij}| < \varepsilon$.
\end{lemma}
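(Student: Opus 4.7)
The plan is to expand each summand of $(\tau-1)(Q_{ij}-\widetilde{Q}_{ij})$ into four differences of ordinary products, each handled directly by Lemma~\ref{lemma:prod}. Starting from the identity
\begin{align*}
&(\beta^k_i-\mu_i)(\beta^k_j-\mu_j) - (\widetilde{\beta}^k_i-\widetilde{\mu}_i)(\widetilde{\beta}^k_j-\widetilde{\mu}_j) \\
&\quad = \bigl[\beta^k_i\beta^k_j - \widetilde{\beta}^k_i\widetilde{\beta}^k_j\bigr] - \bigl[\mu_i\beta^k_j - \widetilde{\mu}_i\widetilde{\beta}^k_j\bigr] - \bigl[\beta^k_i\mu_j - \widetilde{\beta}^k_i\widetilde{\mu}_j\bigr] + \bigl[\mu_i\mu_j - \widetilde{\mu}_i\widetilde{\mu}_j\bigr],
\end{align*}
I would aim to bound each of the four bracketed differences by $\varepsilon(\tau-1)/(4\tau)$, so that the triangle inequality gives a per-$k$ bound of $\varepsilon(\tau-1)/\tau$, and then the outer factor $1/(\tau-1)$ together with the sum over $\tau$ values of $k$ collapses to the desired $\varepsilon$.

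Every bracket has the shape $LM - fg$ covered by Lemma~\ref{lemma:prod}, applied with target tolerance $\varepsilon(\tau-1)/(4\tau)$ in place of its $\varepsilon$. For the first bracket I would take $L := \beta^k_i$, $f := \widetilde{\beta}^k_i$, $M := \beta^k_j$, $g := \widetilde{\beta}^k_j$, in which case the two prerequisites of Lemma~\ref{lemma:prod} become exactly $|\beta^k_i-\widetilde{\beta}^k_i|<\min\{1, \varepsilon(\tau-1)/(8\tau|\beta^k_j|)\}$ and $|\beta^k_j-\widetilde{\beta}^k_j|<\min\{1, \varepsilon(\tau-1)/(\tau(8|\beta^k_i|+8))\}$, both of which are guaranteed by clauses inside the hypothesised minima. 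The other three brackets follow by cycling $(L,M)$ through the pairs $(\mu_i,\mu_j)$, $(\mu_i,\beta^k_j)$, and $(\beta^k_i,\mu_j)$, using $\widetilde{\mu}$ or $\widetilde{\beta}^k$ for $f$ and $g$ correspondingly. Each bracket consumes precisely one min-clause from each of the four hypotheses, which is why the statement of the lemma bundles four triple-minima rather than single inequalities.

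The computation itself is routine once the decomposition is in place; the main obstacle is the combinatorial bookkeeping. The factor $8$ in every hypothesis denominator is exactly $4\times 2$: a factor of $4$ comes from splitting into four brackets, each of which may absorb at most a quarter of the per-$k$ budget $\varepsilon(\tau-1)/\tau$, and a further factor of $2$ is inherited from the $2|M|$ and $2|L|+2$ denominators in Lemma~\ref{lemma:prod}. The asymmetry between the $i$-clauses (plain $8|\cdot|$) and the $j$-clauses (shifted $8|\cdot|+8$) in the hypotheses mirrors precisely the asymmetry between $2|M|$ and $2|L|+2$ in Lemma~\ref{lemma:prod}; I expect the main care needed in writing this out is to verify that each of the four brackets uses a disjoint subset of the min-clauses, so that no single hypothesis is over-taxed.
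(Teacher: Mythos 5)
Your proposal is correct and follows essentially the same route as the paper's own proof: the identical four-bracket expansion of each summand, followed by four applications of Lemma~\ref{lemma:prod} with target tolerance $\frac{\varepsilon}{4}\left(\frac{\tau-1}{\tau}\right)$, which is exactly how the hypotheses' min-clauses are consumed. Your accounting of the factor of $8$ and of the $2|M|$ versus $2|L|+2$ asymmetry matches the paper, and your triangle-inequality step even corrects a sign typo in the paper's displayed bound (which writes minus signs between the absolute values where plus signs are intended).
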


\begin{proof}
\begin{align}
Q_{ij} - \widetilde{Q}_{ij} &= \frac{1}{\tau-1} \sum_{k=1}^\tau
\left[
(\beta^k_i - \mu_i)(\beta^k_j - \mu_j) - (\widetilde{\beta}^k_i - \wtm_i)(\widetilde{\beta}^k_j - \wtm_j)
\right]\\
&=\frac{1}{\tau-1} \sum_{k=1}^\tau
\left[
(\beta^k_i\beta^k_j - \widetilde{\beta}^k_i\widetilde{\beta}^k_j) - 
(\beta^k_i\mu_j - \widetilde{\beta}^k_i\wtm_j) -
(\mu_i\beta^k_j - \wtm_i\widetilde{\beta}^k_j) +
(\mu_i\mu_j - \wtm_i\wtm_j)
\right]
\end{align}
Hence
\begin{align}
\left|Q_{ij} - \widetilde{Q}_{ij}\right| &\le \frac{1}{\tau-1} \sum_{k=1}^\tau
\left[
\left|\beta^k_i\beta^k_j - \widetilde{\beta}^k_i\widetilde{\beta}^k_j\right| - 
\left|\beta^k_i\mu_j - \widetilde{\beta}^k_i\wtm_j\right| -
\left|\mu_i\beta^k_j - \wtm_i\widetilde{\beta}^k_j\right| +
\left|\mu_i\mu_j - \wtm_i\wtm_j\right|
\right]
\end{align}

By Lemma \ref{lemma:prod}, if
\begin{equation}
| \widetilde{\beta}^k_i - \beta^k_i | < \min\left\{1,\frac{\varepsilon}{8|\beta^k_j|}\left(\frac{\tau-1}{\tau}\right)\right\}
\qquad
\text{and} 
\qquad
|\widetilde{\beta}^k_j - \beta^k_j | < \min\left\{1,\frac{\varepsilon}{8|\beta^k_i|+8}\left(\frac{\tau-1}{\tau}\right)\right\}
\end{equation}
then
\begin{equation}
\left|\beta^k_i\beta^k_j - \widetilde{\beta}^k_i\widetilde{\beta}^k_j\right| < \frac{\varepsilon}{4}\left(\frac{\tau-1}{\tau}\right).
\end{equation}
If
\begin{equation}
| \widetilde{\mu}_i - \mu_i | < \min\left\{1,\frac{\varepsilon}{8|\mu_j|}\left(\frac{\tau-1}{\tau}\right)\right\}
\qquad
\text{and} 
\qquad
|\widetilde{\mu}_j - \mu_j | < \min\left\{1,\frac{\varepsilon}{8|\mu_i|+8}\left(\frac{\tau-1}{\tau}\right)\right\}
\end{equation}
then
\begin{equation}
\left|\mu_i\mu_j - \widetilde{\mu}_i\widetilde{\mu}_j\right| < \frac{\varepsilon}{4}\left(\frac{\tau-1}{\tau}\right).
\end{equation}
If
\begin{equation}
| \widetilde{\beta}^k_i - \beta^k_i | < \min\left\{1,\frac{\varepsilon}{8|\mu_j|}\left(\frac{\tau-1}{\tau}\right)\right\}
\qquad
\text{and} 
\qquad
|\widetilde{\mu}_j - \mu_j | < \min\left\{1,\frac{\varepsilon}{8|\beta^k_i|+8}\left(\frac{\tau-1}{\tau}\right)\right\}
\end{equation}
then
\begin{equation}
\left|\beta^k_i\mu_j - \widetilde{\beta}^k_i\wtm_j\right| < \frac{\varepsilon}{4}\left(\frac{\tau-1}{\tau}\right).
\end{equation}
Similarly, if
\begin{equation}
| \widetilde{\mu}_i - \mu_i | < \min\left\{1,\frac{\varepsilon}{8|\beta^k_j|}\left(\frac{\tau-1}{\tau}\right)\right\}
\qquad
\text{and} 
\qquad
|\widetilde{\beta}^k_j - \beta^k_j | < \min\left\{1,\frac{\varepsilon}{8|\mu_i|+8}\left(\frac{\tau-1}{\tau}\right)\right\}
\end{equation}
then
\begin{equation}
\left|\mu_i\beta^k_j - \wtm_i\widetilde{\beta}^k_j\right| < \frac{\varepsilon}{4}\left(\frac{\tau-1}{\tau}\right).
\end{equation}

Thus we can combine these so that
\begin{align}
\left|Q_{ij} - \widetilde{Q}_{ij}\right| &< \frac{1}{\tau-1} \sum_{k=1}^\tau
\left[
\frac{\varepsilon}{4}\left(\frac{\tau-1}{\tau}\right) +
\frac{\varepsilon}{4}\left(\frac{\tau-1}{\tau}\right) +
\frac{\varepsilon}{4}\left(\frac{\tau-1}{\tau}\right) +
\frac{\varepsilon}{4}\left(\frac{\tau-1}{\tau}\right)
\right] < \varepsilon.
\end{align}
\end{proof}

It is now possible to obtain a bound on the error in the covariance of the model error distribution in terms of the analysis error. For clarity, the notation $\cdot_{,i}$ also refers to the $i^{\text{th}}$ component of a vector that already has a subscript.
\begin{theorem}\label{thm:q}
Suppose $f$ is Lipschitz with constant $L$. Suppose further that, for all $k$, the analysis satisfies
\begin{equation}
|x^k_{t,i} - x^k_{a,i}| < \frac{1}{L+1}\min \left\{ 1, \frac{\varepsilon}{8|\beta^k_j|}\left(\frac{\tau-1}{\tau}\right), \frac{\varepsilon}{8|\mu_j|}\left(\frac{\tau-1}{\tau}\right)\right\}
\end{equation}
and 
\begin{equation}
|x^k_{t,j} - x^k_{a,j}| < \frac{1}{L+1}\min \left\{ 1, \frac{\varepsilon}{8|\beta^k_i|+8}\left(\frac{\tau-1}{\tau}\right), \frac{\varepsilon}{8|\mu_i|+8}\left(\frac{\tau-1}{\tau}\right)\right\}
\end{equation}
Then the corresponding estimated sample error covariance satisfies $|Q_{ij} - \widetilde{Q}_{ij}| < \varepsilon$.
\end{theorem}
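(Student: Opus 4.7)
The plan is to chain Theorem \ref{thm:beta}, Corollary \ref{cor:mu}, and Lemma \ref{lemma:q} together. The bounds appearing in the hypotheses of this theorem are deliberately equal to $\frac{1}{L+1}$ times the bounds that appear in the hypotheses of Lemma \ref{lemma:q} on $|\beta^k_i - \widetilde{\beta}^k_i|$ and $|\beta^k_j - \widetilde{\beta}^k_j|$, so the factor of $\frac{1}{L+1}$ is exactly what Theorem \ref{thm:beta} absorbs when translating analysis error into the estimated model error.

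First, I would apply Theorem \ref{thm:beta} componentwise to each of the two hypotheses of the present theorem. Reading the Theorem with $|\cdot|$ as the coordinate projection (its proof is the triangle inequality plus the Lipschitz estimate and goes through unchanged if one interprets the norms as $\ell_\infty$ and uses $|v_i| \le \|v\|_\infty$), this immediately yields the first two hypotheses of Lemma \ref{lemma:q}: the bounds on $|\beta^k_i - \widetilde{\beta}^k_i|$ and $|\beta^k_j - \widetilde{\beta}^k_j|$ with the same min-quantities on the right-hand side.

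Next, I would apply Corollary \ref{cor:mu} componentwise — equivalently, I would average the componentwise $\beta$-bounds from the previous step via the triangle inequality, exactly as done in the proof of Corollary \ref{cor:mu} — to obtain the two remaining hypotheses of Lemma \ref{lemma:q} on $|\mu_i - \widetilde{\mu}_i|$ and $|\mu_j - \widetilde{\mu}_j|$. Note here that, because the componentwise $\beta$-bound holds for every $k$ with the same right-hand side, averaging preserves it. With all four hypotheses of Lemma \ref{lemma:q} now verified, the conclusion $|Q_{ij} - \widetilde{Q}_{ij}| < \varepsilon$ follows by direct invocation of the lemma.

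The main obstacle is purely notational: Theorem \ref{thm:beta} is stated with a vector norm $|\cdot|$ on $\mathbb{R}^N$, whereas Lemma \ref{lemma:q} is framed in terms of componentwise scalar bounds. The fix is to interpret $|\cdot|$ consistently throughout Theorem \ref{thm:beta} as the sup-norm, so that its conclusion controls every coordinate simultaneously, or equivalently to rerun its two-line proof on each fixed coordinate $i$ and $j$. Beyond this bookkeeping issue, no new analytic idea is required — the theorem is simply the assembly of the three preceding results.
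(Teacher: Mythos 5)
Your proposal is correct and follows essentially the same route as the paper: apply Theorem \ref{thm:beta} componentwise to convert the analysis-error hypotheses into the $\beta$-bounds of Lemma \ref{lemma:q}, apply Corollary \ref{cor:mu} componentwise to obtain the $\mu$-bounds, and then invoke Lemma \ref{lemma:q}. Your remark about interpreting the norm in Theorem \ref{thm:beta} coordinatewise is a point the paper silently glosses over, but the assembly of the three preceding results is identical.
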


\begin{proof}
By Theorem \ref{thm:beta}, if
\begin{equation}
|x^k_{t,i} - x^k_{a,i}| < \frac{1}{L+1}\min \left\{ 1, \frac{\varepsilon}{8|\beta^k_j|}\left(\frac{\tau-1}{\tau}\right), \frac{\varepsilon}{8|\mu_j|}\left(\frac{\tau-1}{\tau}\right)\right\}
\end{equation}
then
\begin{equation}
|\widetilde{\beta}^k_i - \beta^k_i| < \min \left\{ 1, \frac{\varepsilon}{8|\beta^k_j|}\left(\frac{\tau-1}{\tau}\right), \frac{\varepsilon}{8|\mu_j|}\left(\frac{\tau-1}{\tau}\right)\right\}.
\end{equation}
Similarly if 
\begin{equation}
|x^k_{t,j} - x^k_{a,j}| < \frac{1}{L+1}\min \left\{ 1, \frac{\varepsilon}{8|\beta^k_i|}\left(\frac{\tau-1}{\tau}\right), \frac{\varepsilon}{8|\mu_i|}\left(\frac{\tau-1}{\tau}\right)\right\}
\end{equation}
then
\begin{equation}
|\widetilde{\beta}^k_j - \beta^k_j| < \min \left\{ 1, \frac{\varepsilon}{8|\beta^k_i|}\left(\frac{\tau-1}{\tau}\right), \frac{\varepsilon}{8|\mu_i|}\left(\frac{\tau-1}{\tau}\right)\right\}.
\end{equation}

By Corollary \ref{cor:mu} if 
\begin{equation}
|x_{t,i}^k - x_{a,i}^k| < \frac{1}{L+1}\min\left\{1, \frac{\varepsilon}{8|\beta^k_j|}\left(\frac{\tau-1}{\tau}\right), \frac{\varepsilon}{8|\mu_j|}\left(\frac{\tau-1}{\tau}\right)\right\}\ \forall k.
\end{equation}
then
\begin{equation}
|\widetilde{\mu}_i - \mu_i| < \min\left\{1, \frac{\varepsilon}{8|\beta^k_j|}\left(\frac{\tau-1}{\tau}\right), \frac{\varepsilon}{8|\mu_j|}\left(\frac{\tau-1}{\tau}\right)\right\}
\end{equation}
and if
\begin{equation}
|x_{t,j}^k - x_{a,j}^k| < \frac{1}{L+1}\min\left\{1, \frac{\varepsilon}{8|\beta^k_i|+8}\left(\frac{\tau-1}{\tau}\right), \frac{\varepsilon}{8|\mu_i|+8}\left(\frac{\tau-1}{\tau}\right)\right\}
\end{equation}
then
\begin{equation}
|\widetilde{\mu}_j - \mu_j| < \min\left\{1, \frac{\varepsilon}{8|\beta^k_i|+8}\left(\frac{\tau-1}{\tau}\right), \frac{\varepsilon}{8|\mu_i|+8}\left(\frac{\tau-1}{\tau}\right)\right\}.
\end{equation}
Hence we can use Lemma \ref{lemma:q}. Thus $|Q_{ij} - \widetilde{Q}_{ij}| < \varepsilon$.

\end{proof}

Corollary \ref{cor:mu} shows that if one desires to know the mean of the model error to an accuracy of $\varepsilon$, then the analysis of that variable must be within $\frac{\varepsilon}{L+1}$ of the truth at every timestep. Contrast this with the result from Theorem \ref{thm:q}: to estimate the variance of a variable to within $\varepsilon$, the analysis of that variable must be within
\begin{equation}
\frac{1}{L+1}\min \left\{ 1, \frac{\varepsilon}{8|\beta^k_i|+8}\left(\frac{\tau-1}{\tau}\right), \frac{\varepsilon}{8|\mu_i|+8}\left(\frac{\tau-1}{\tau}\right)\right\}
\end{equation}
of the truth at each timestep. With the factor of $8$ in the denominator, this can be seen as approximately an order of magnitude more accuracy needed in the analysis to achieve the same accuracy in the approximation. Moreover, the accuracy in the variance depends on the mean value of that variable. The larger the mean value, the more accuracy required in the analysis to give the same error in the variance. This may be more easily understood conversely: the larger the mean value in a variable, the larger the error in the estimation of its variance.

\section{Numerical experiments}
In this section we apply the model error moment estimation method as detailed in the previous sections to the Lorenz 96 system \citep{Lorenz1996}. This is a very simple example to show numerically that the errors in the approximations of $\mu$ and $Q$ are dependent on the quality of the analysis $x_a$. The Lorenz 96 system is given by the following ODE:

\begin{equation}
\frac{\mathrm{d}x_i}{\mathrm{d}t} = -x_{i-2}x_{i-1} + x_{i-1}x_{i+1}-x_i + 8,
\end{equation}
where $i \in {1,\ldots,N}$ with cyclic boundary conditions. We choose $N = 40$.
We prescribe the truth to evolve as \eqref{eqn:truth} with $\beta \sim \mathcal{N}(\overline{\mu},\overline{Q})$ where
\begin{equation}
\overline{\mu}(x_i) = \frac{1}{5}\sin(\frac{\pi i}{N}),
\end{equation}
and $\overline{Q} = \left(\frac{1}{10}\right)^2\frac{3}{2}C$ where
\begin{equation}
C_{ij} = \begin{cases}
1 & \text{if } i=j\\
\frac{2}{3} & \text{if } |i-j|=1 \\
\frac{1}{6} & \text{if } |i-j|=2 \\
0 & \text{otherwise}
\end{cases}.
\end{equation}
$\overline{\mu}$ and $\overline{Q}$ have been chosen to be \emph{interesting} and $\overline{Q}$ in particular, for implementation purposes, has a rather simple symmetric square root.

We take observations of the true state at every timestep. The observations are related to the state via the equation
\begin{equation}
y^k = H(x^k_t) + \eta^k \qquad \text{where} \qquad \eta \sim \mathcal{N}(0,R).
\end{equation}
Here we consider $H=I$ and will show two different experiments with observations of varying accuracy. The first experiment will have $R=10^{-8}I$, and represents very informative observations. The second experiment will have $R=10^{-3}I$, and represents observations containing less information.

To get an analysis $x_a$, we perform a simple 3DVar at each timestep. That is, 
\begin{equation}
x_a^k = \arg\min_x \frac{1}{2}(x-f(x_a^{k-1}))^TB^{-1}(x-f(x_a^{k-1})) + \frac{1}{2}(y^k-H(x))^TR^{-1}(y^k-H(x)).
\end{equation}
We use $B=10^{12}I$ as a particularly uninformative prior. The model is evolved for 3000 timesteps, giving us 2999 samples of $\widetilde{\beta}^k$. Figure \ref{fig:hov} shows Hovm\"oller diagrams of the analysis error as a result of the data assimilation process. The higher precision observations lead to significantly lower estimation errors.

Using these analyses, estimates for $\widetilde{\mu}$ and $\widetilde{Q}$ are calculated. Results for $R=10^{-8}$ are shown in Figures \ref{fig:mu_8} to \ref{fig:Qe_8} and for $R=10^{-3}$ are shown in Figures \ref{fig:mu_3} to \ref{fig:Qe_3}. These results show that the second order moment of model error, $Q$, is much more sensitive to the quality of the analysis than the first order moment, $\mu$. This is consistent with the theory set out in the previous section.

\begin{figure}[h]
\begin{subfigure}[t]{0.48\textwidth}
\centering\caption{$R=10^{-8}$ $\downarrow$}
\includegraphics[width=\textwidth]{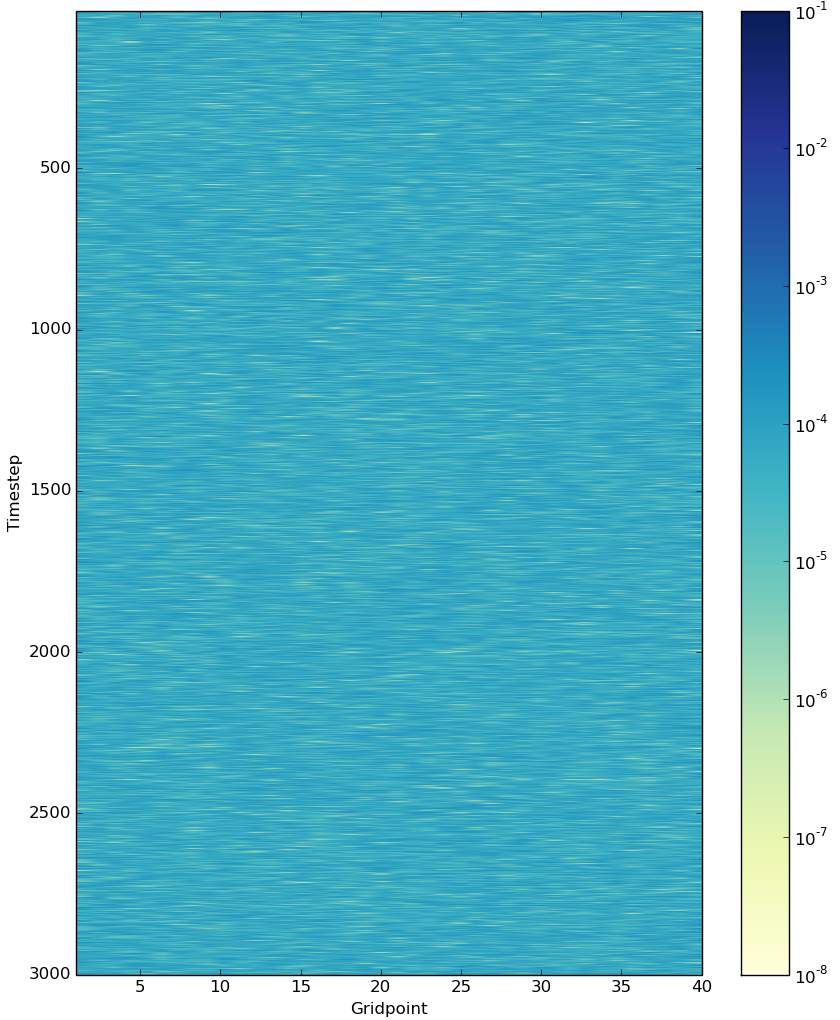}
\end{subfigure}
\begin{subfigure}[t]{0.48\textwidth}
\centering\caption{$R=10^{-3}$ $\downarrow$}
\includegraphics[width=\textwidth]{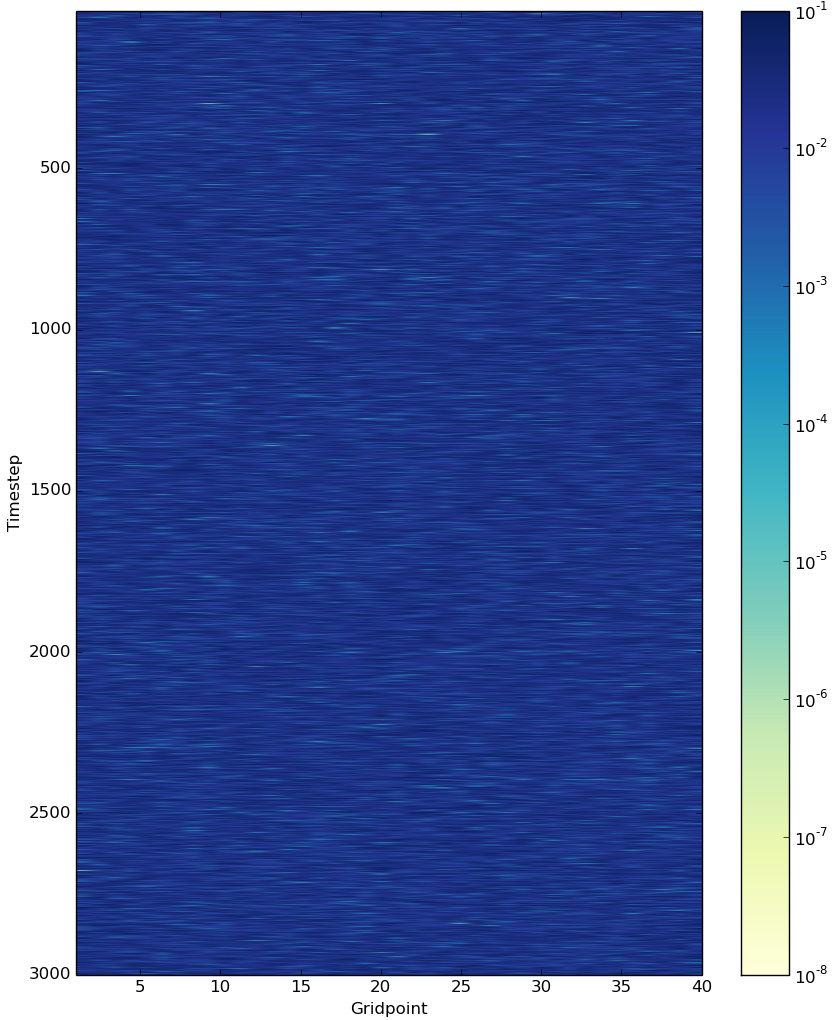}
\end{subfigure}
\caption{Hovm\"oller plots of $|x_t - x_a|$ for the two different precision observations. Observe that the 3DVar algorithm can reduce the error in the state by $2-3$ orders of magnitude when the observations are more precise.}\label{fig:hov}
\end{figure}

\begin{figure}[h]
\begin{subfigure}[t]{0.48\textwidth}
\centering\includegraphics{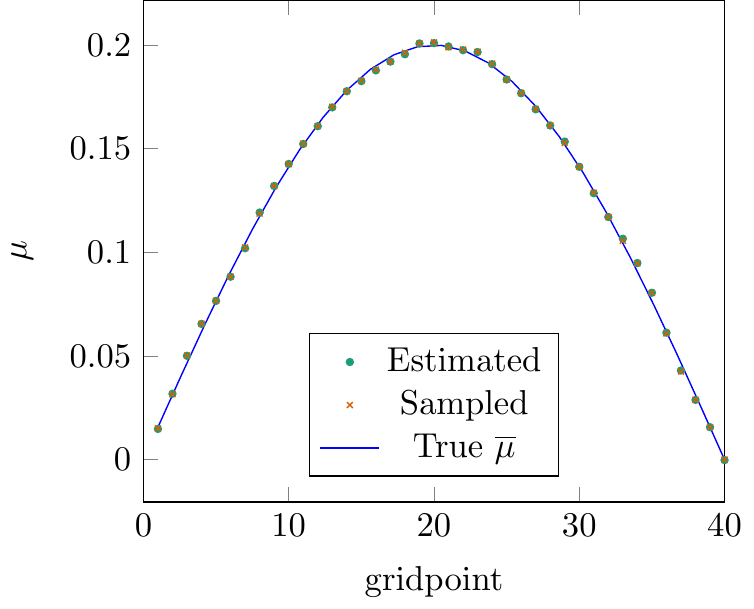}
\caption{Estimated, sampled, and true mean of model error}\label{fig:mu}
\end{subfigure}
\begin{subfigure}[t]{0.48\textwidth}
\centering\includegraphics{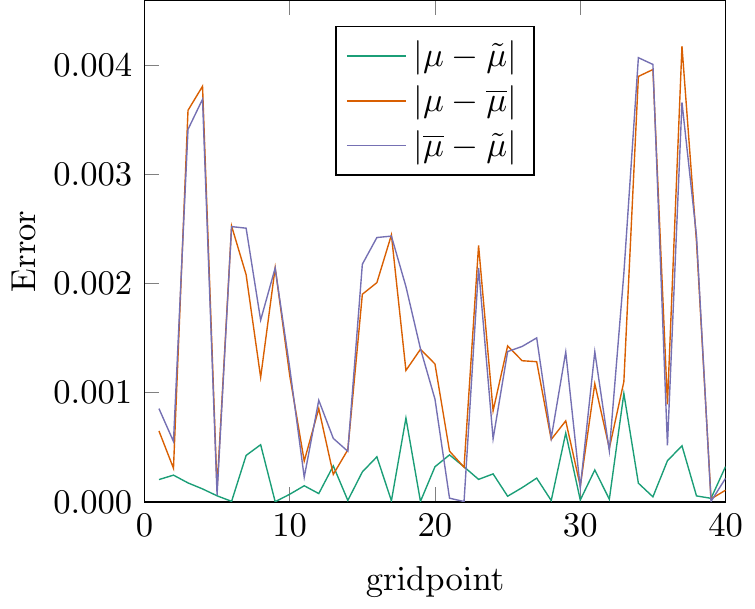}
\caption{Errors in the mean}
\end{subfigure}

\caption{The first moment of model error for $R=10^{-8}$}\label{fig:mu_8}
\end{figure}

\begin{figure}
\begin{subfigure}{0.33\textwidth}
\caption{True $\overline{Q}$ matrix}
\includegraphics[width=1.13\textwidth]{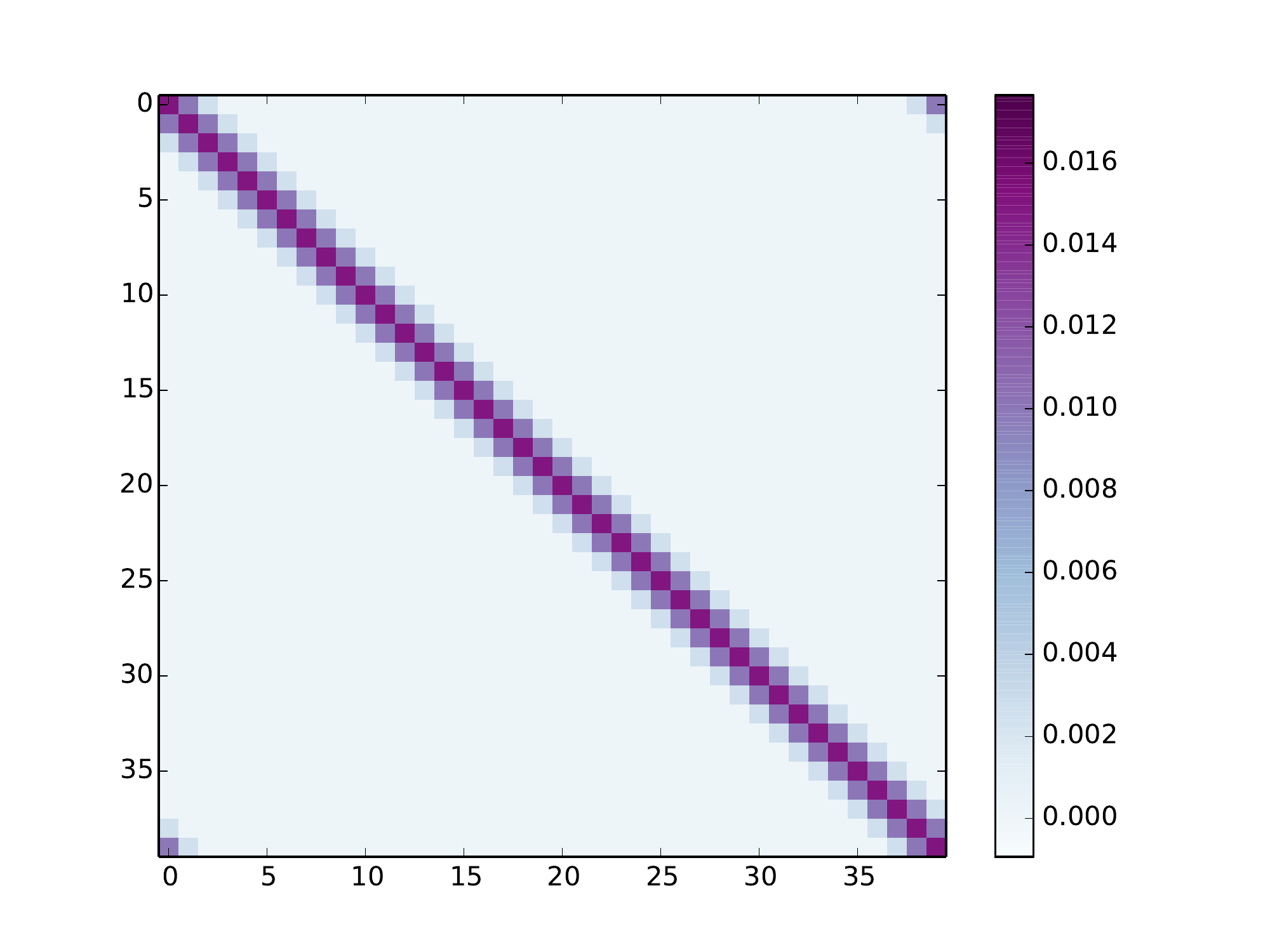}
\end{subfigure}
\begin{subfigure}{0.33\textwidth}
\caption{Sampled $Q$ matrix}
\includegraphics[width=1.13\textwidth]{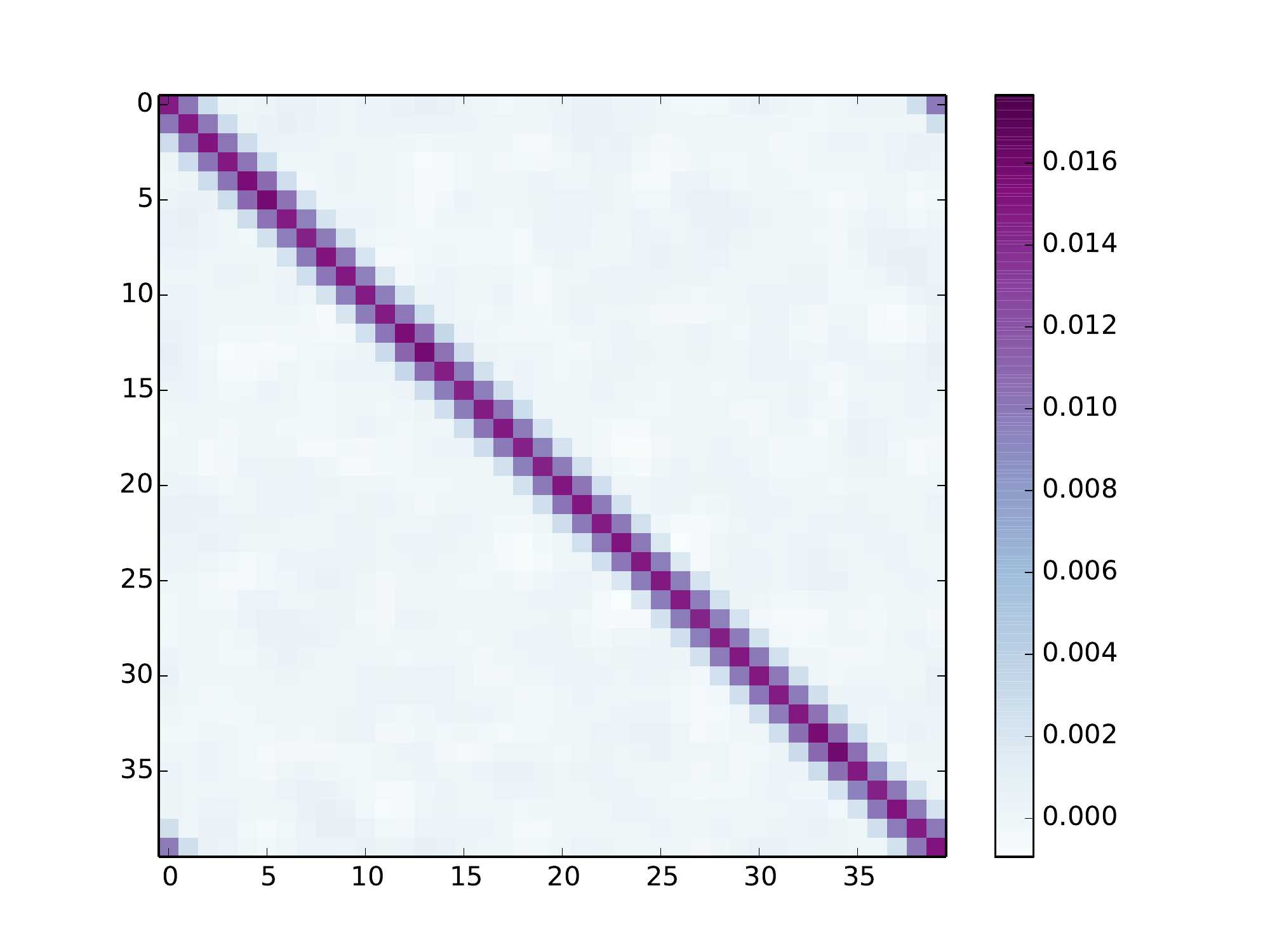}
\end{subfigure}
\begin{subfigure}{0.33\textwidth}
\caption{Estimated $\widetilde{Q}$ matrix}
\includegraphics[width=1.13\textwidth]{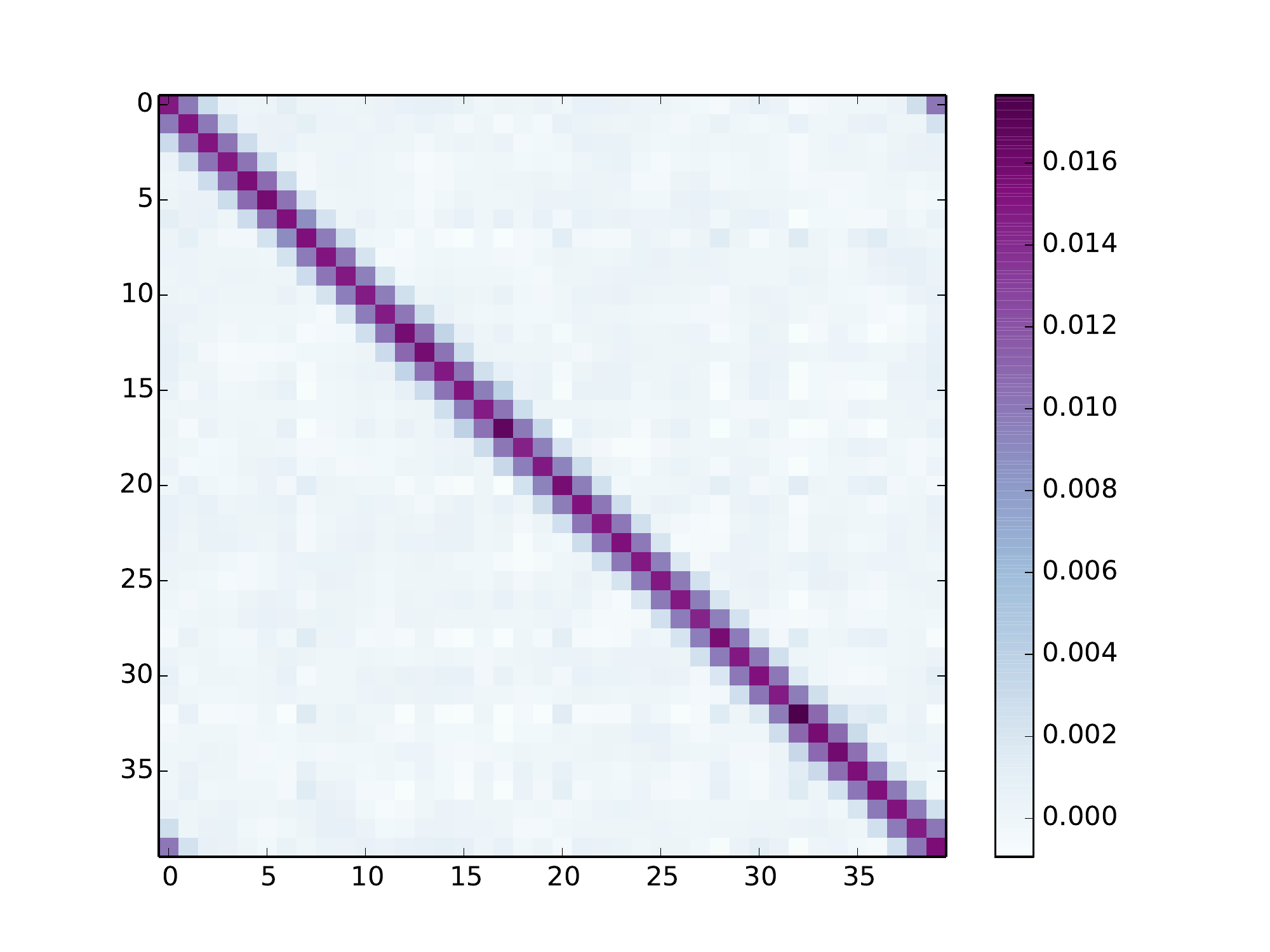}
\end{subfigure}
\caption{True, sampled, and estimated covariance of the model error $R=10^{-8}$}\label{fig:Q_8}
\end{figure}

\begin{figure}\captionsetup{justification=centering}
\begin{subfigure}[b]{0.33\textwidth}
\centering\caption{$|Q-\overline{Q}|$ \\showing the\\ sampling error}
\includegraphics[width=1.13\textwidth]{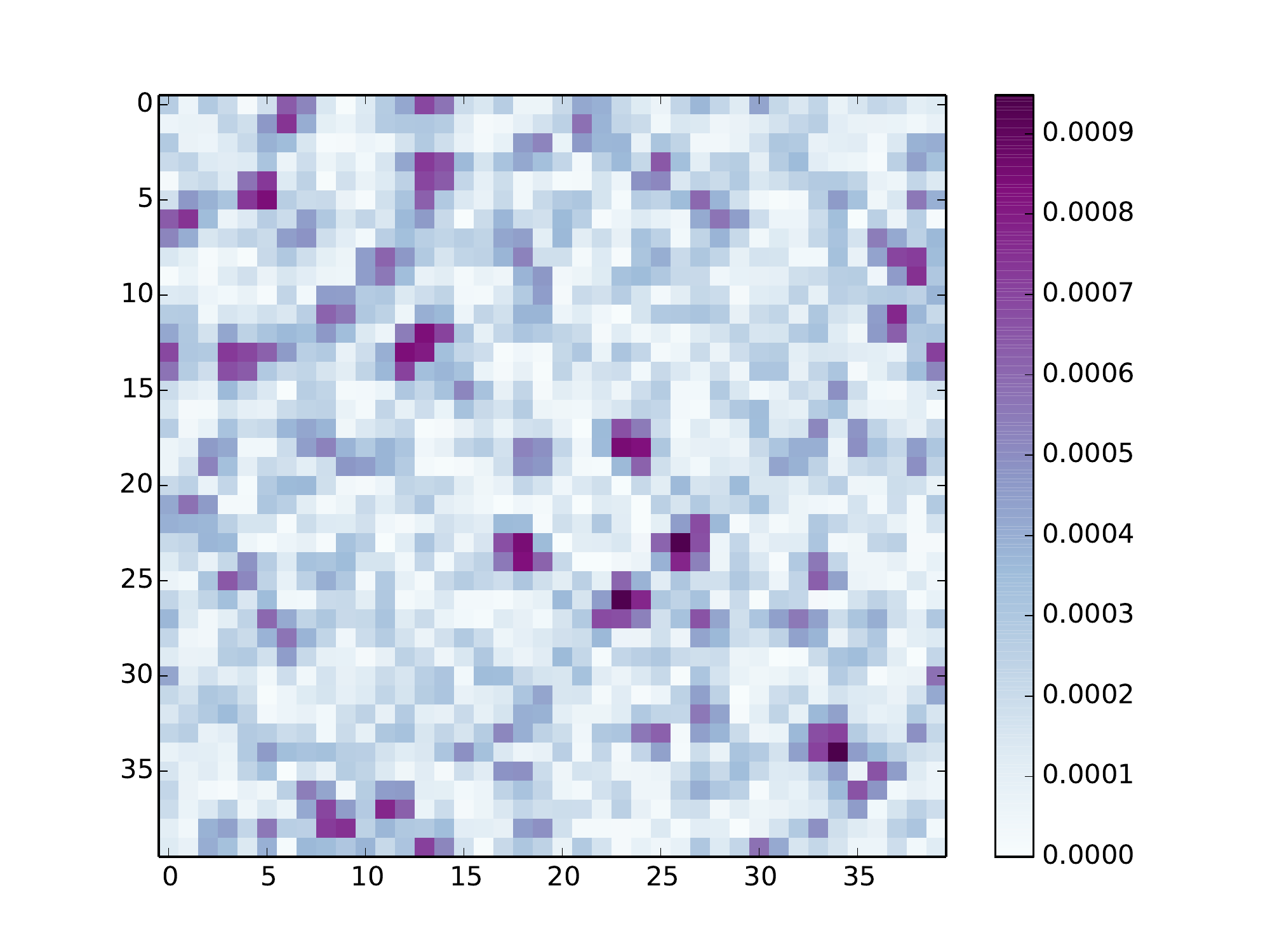}
\end{subfigure}
\begin{subfigure}[b]{0.33\textwidth}
\centering\caption{$|Q-\widetilde{Q}|$ \\showing the error in the estimate\\ and the samples which occurred}
\includegraphics[width=1.13\textwidth]{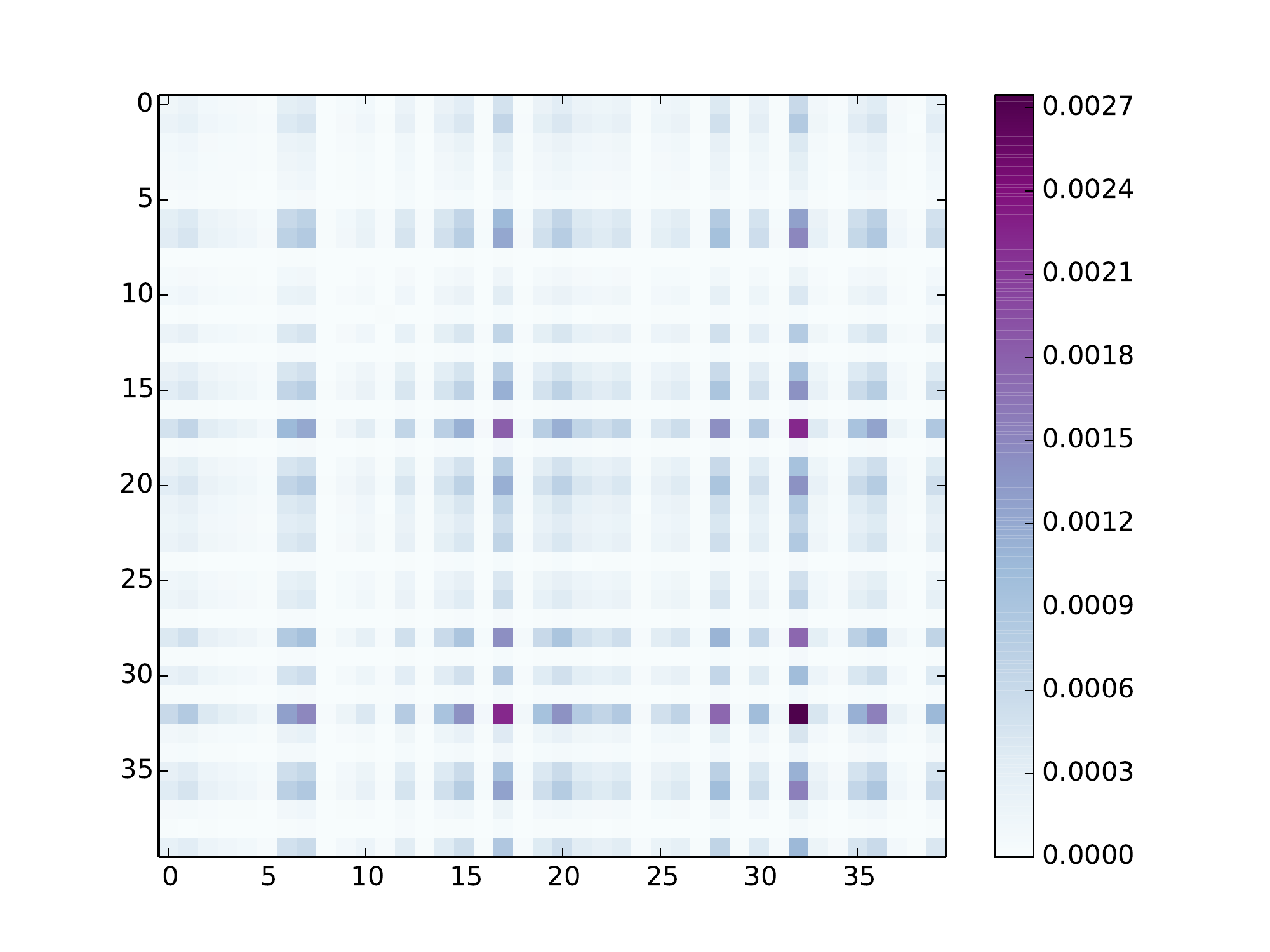}
\end{subfigure}
\begin{subfigure}[b]{0.33\textwidth}
\centering\caption{$|\overline{Q}-\widetilde{Q}|$\\ showing the error in the estimation\\ and the underlying true covariance}
\includegraphics[width=1.13\textwidth]{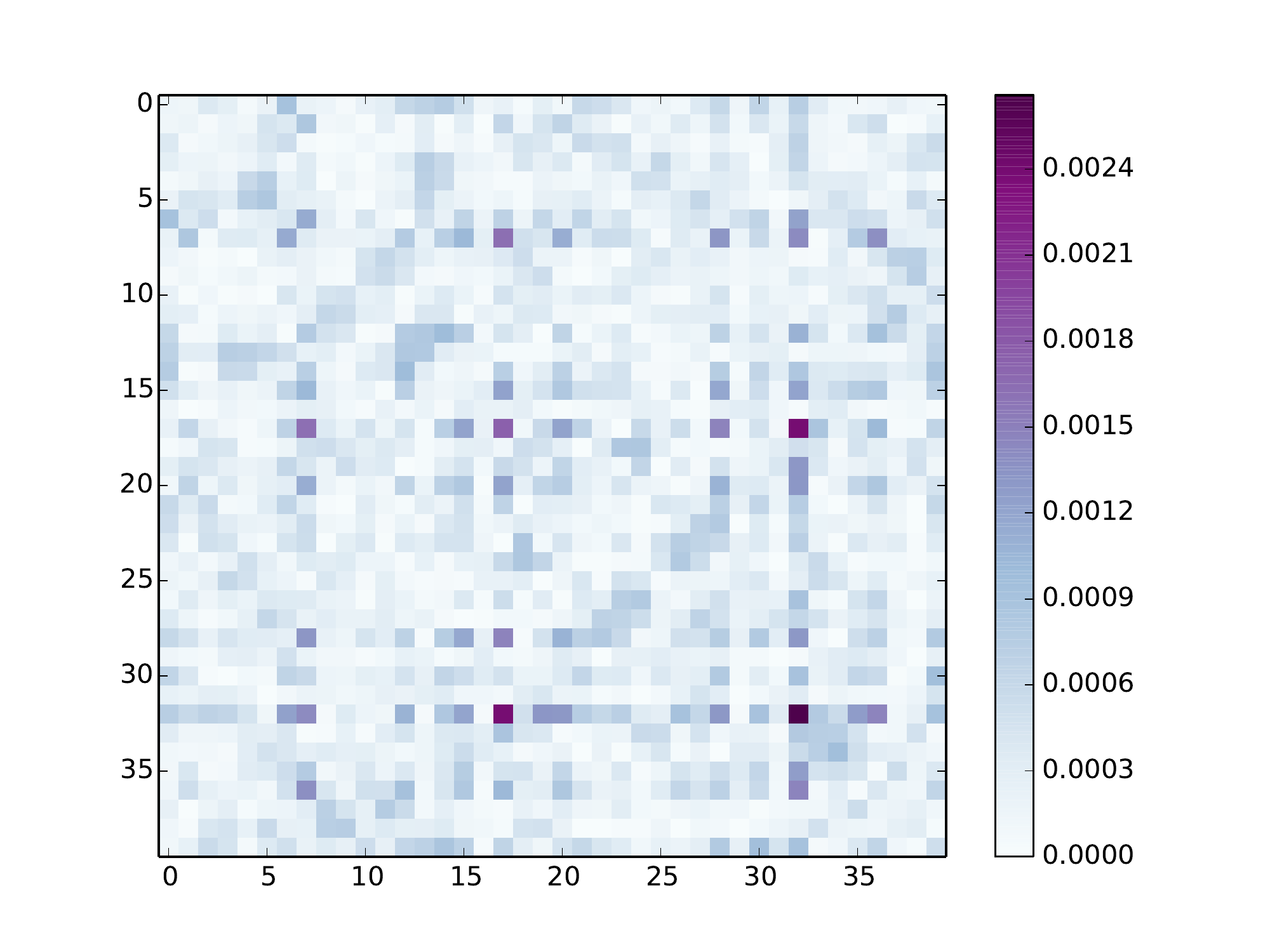}
\end{subfigure}
\caption{Errors in the model error covariance matrix $R=10^{-8}$}\label{fig:Qe_8}
\end{figure}

\begin{figure}[h]
\begin{subfigure}[t]{0.48\textwidth}
\centering\includegraphics{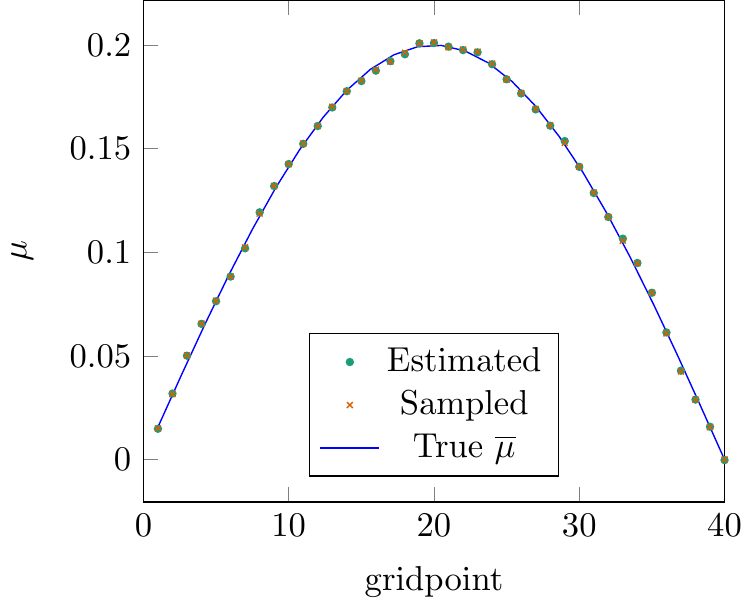}
\caption{Estimated, sampled, and true mean of model error}\label{fig:mu}
\end{subfigure}
\begin{subfigure}[t]{0.48\textwidth}
\centering\includegraphics{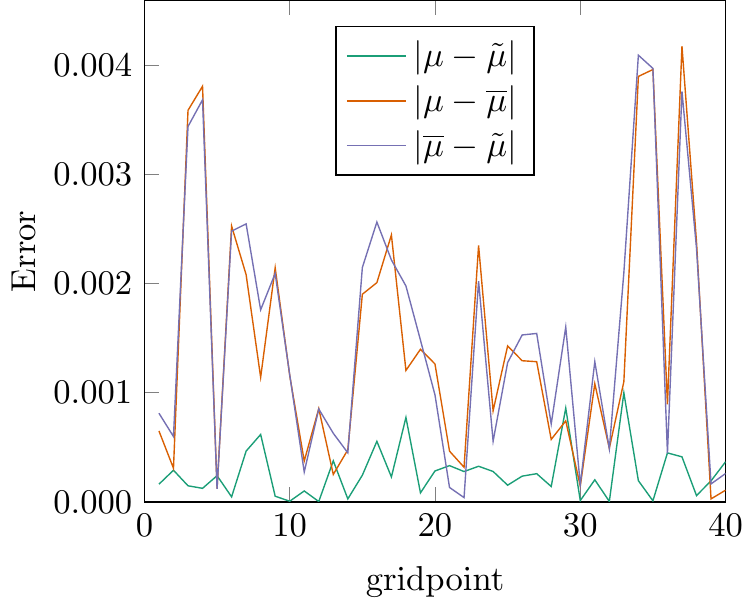}
\caption{Errors in the mean}
\end{subfigure}

\caption{The first moment of model error for $R=10^{-3}$}\label{fig:mu_3}
\end{figure}

\begin{figure}
\begin{subfigure}{0.33\textwidth}
\caption{True $\overline{Q}$ matrix}
\includegraphics[width=1.13\textwidth]{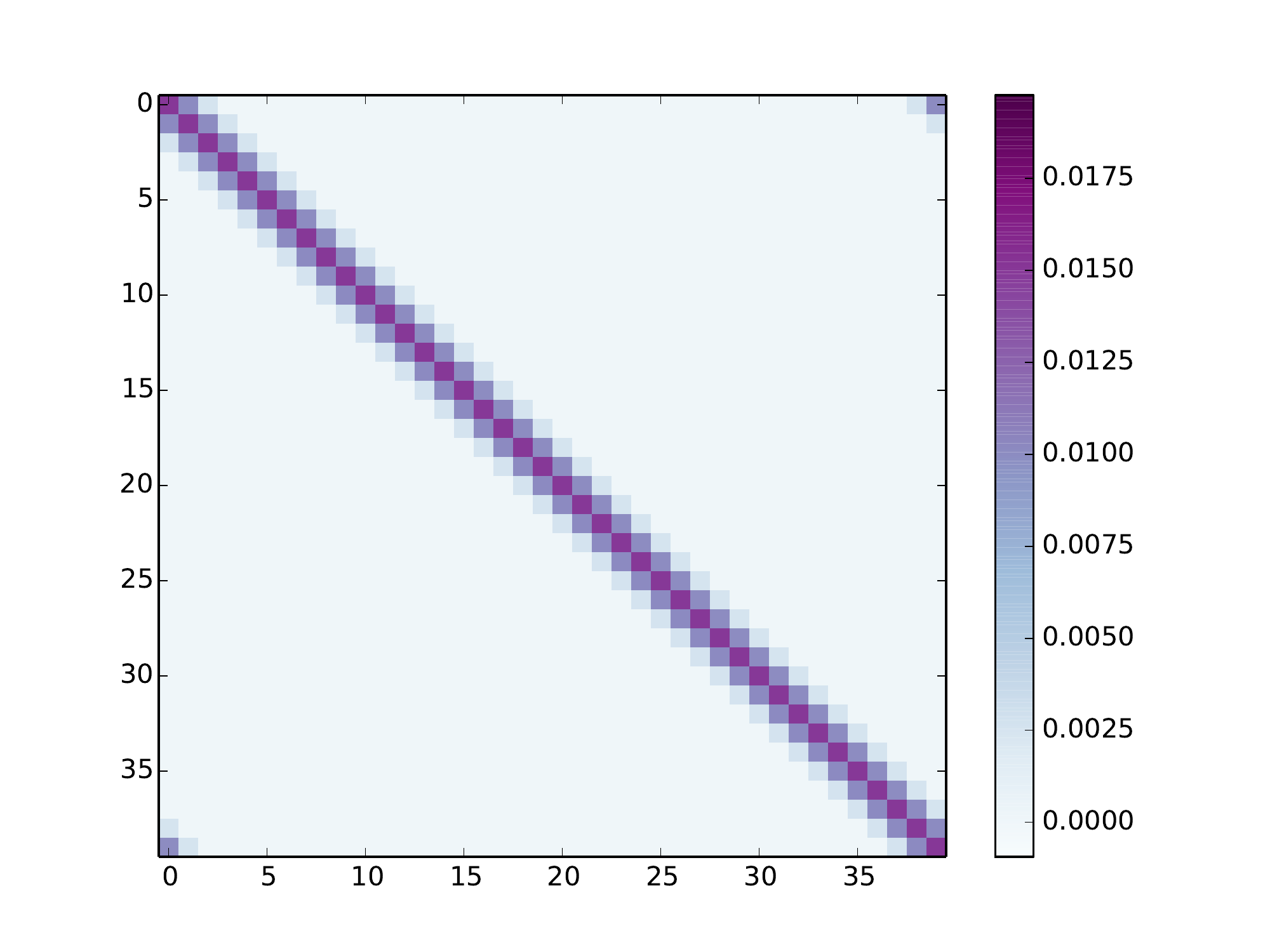}
\end{subfigure}
\begin{subfigure}{0.33\textwidth}
\caption{Sampled $Q$ matrix}
\includegraphics[width=1.13\textwidth]{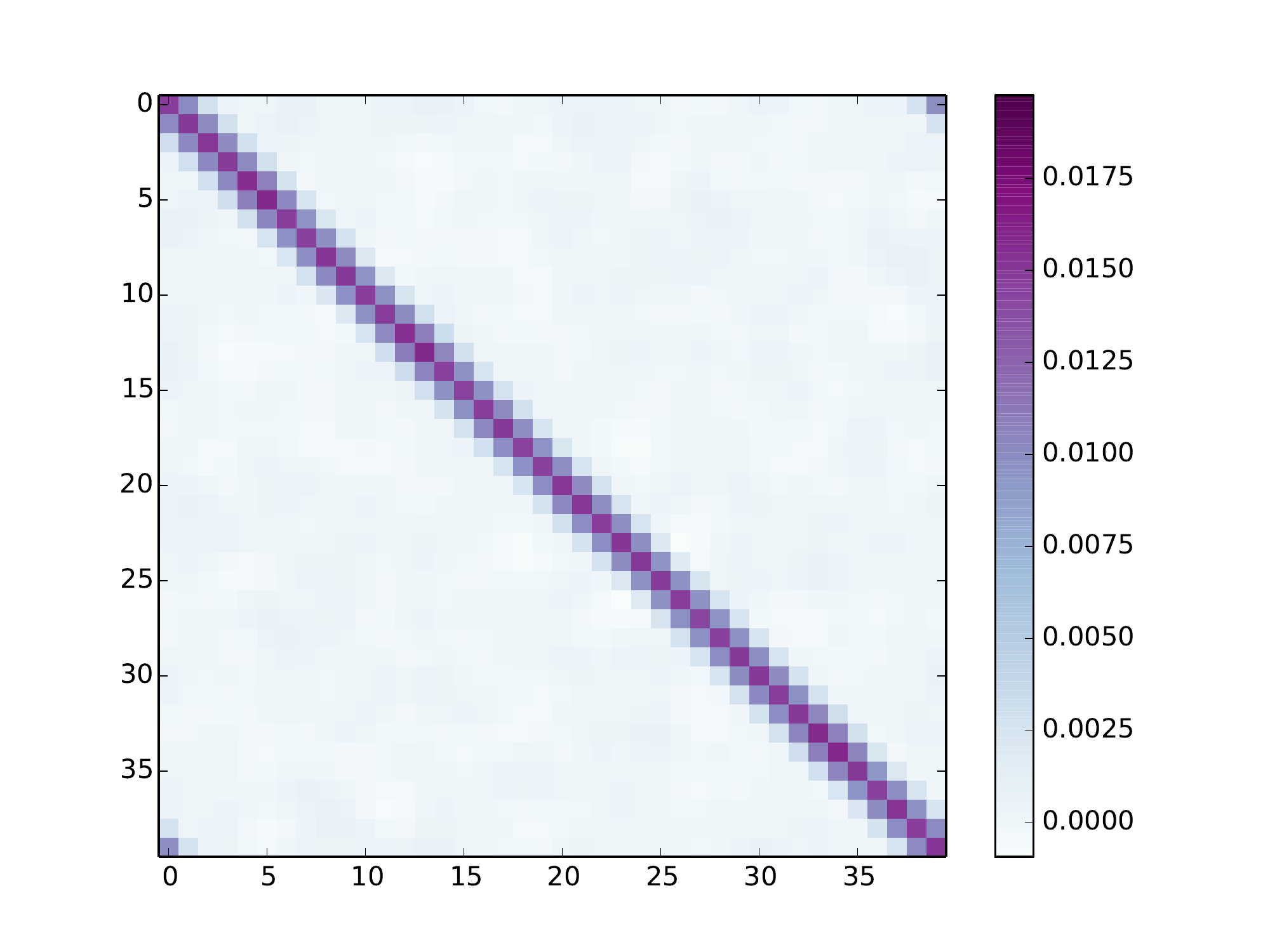}
\end{subfigure}
\begin{subfigure}{0.33\textwidth}
\caption{Estimated $\widetilde{Q}$ matrix}
\includegraphics[width=1.13\textwidth]{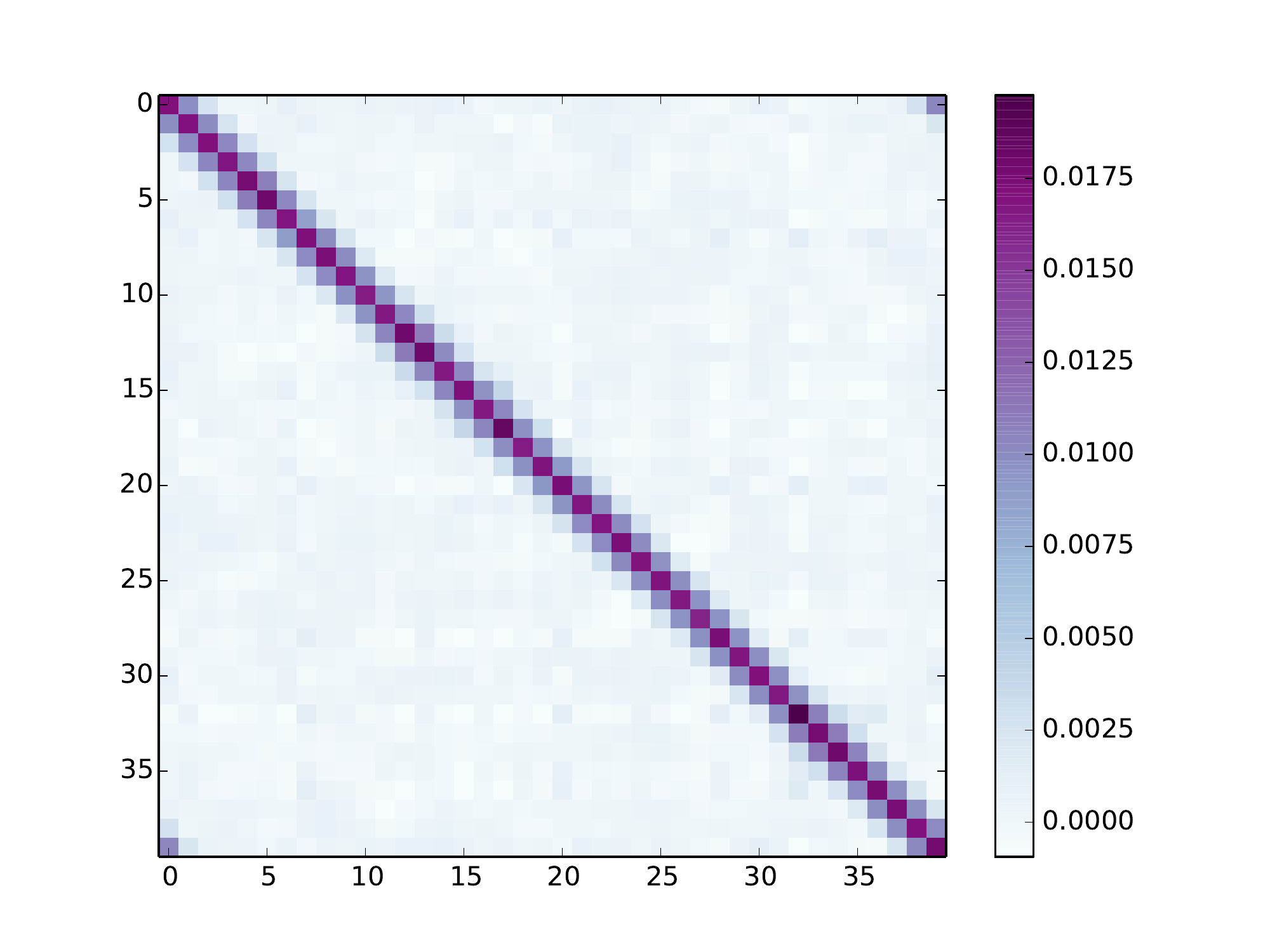}
\end{subfigure}
\caption{True, sampled, and estimated covariance of the model error $R=10^{-3}$}\label{fig:Q_3}
\end{figure}

\begin{figure}\captionsetup{justification=centering}
\begin{subfigure}[b]{0.33\textwidth}
\centering\caption{$|Q-\overline{Q}|$ \\showing the\\ sampling error}
\includegraphics[width=1.13\textwidth]{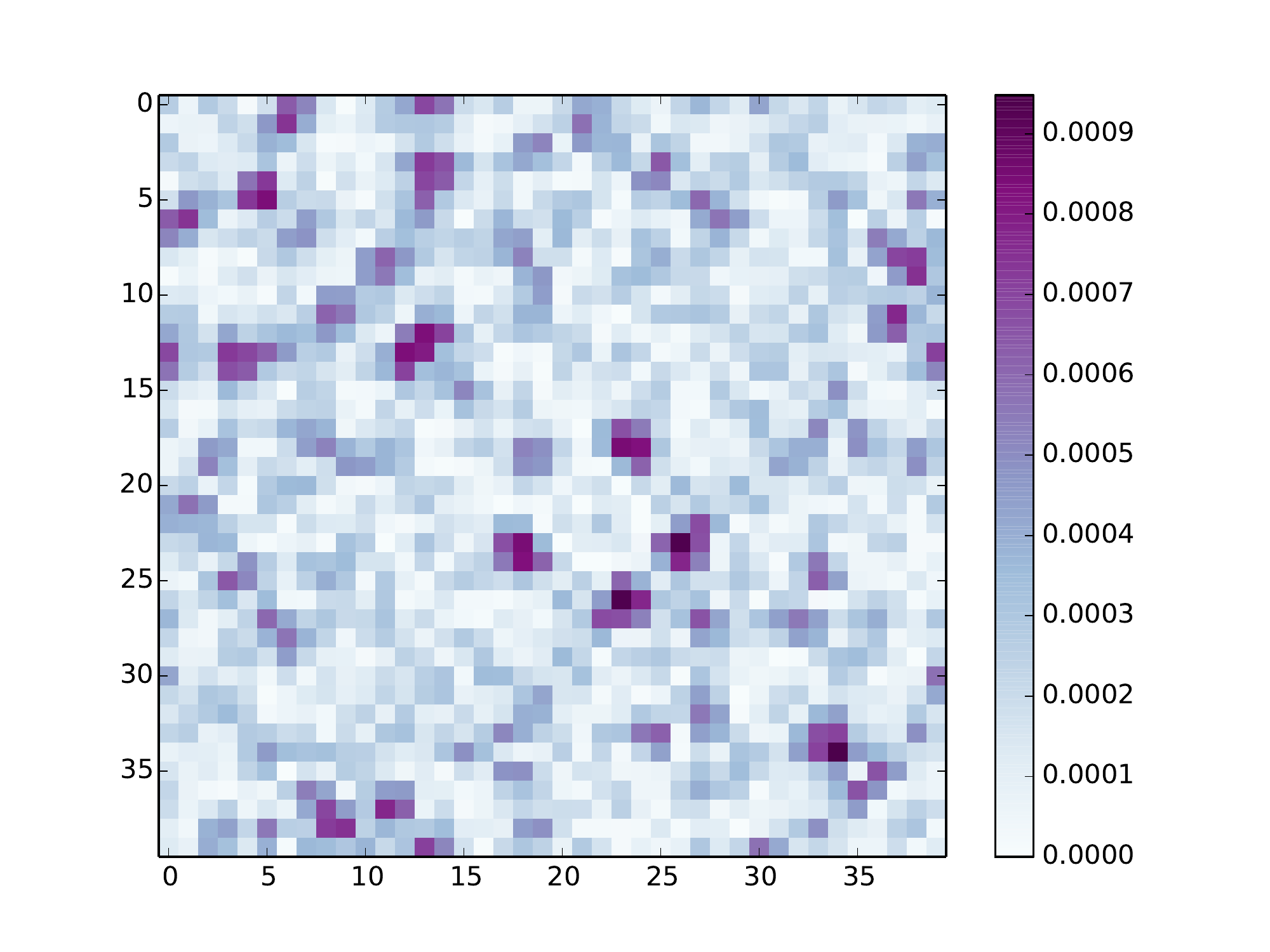}
\end{subfigure}
\begin{subfigure}[b]{0.33\textwidth}
\centering\caption{$|Q-\widetilde{Q}|$ \\showing the error in the estimate\\ and the samples which occurred}
\includegraphics[width=1.13\textwidth]{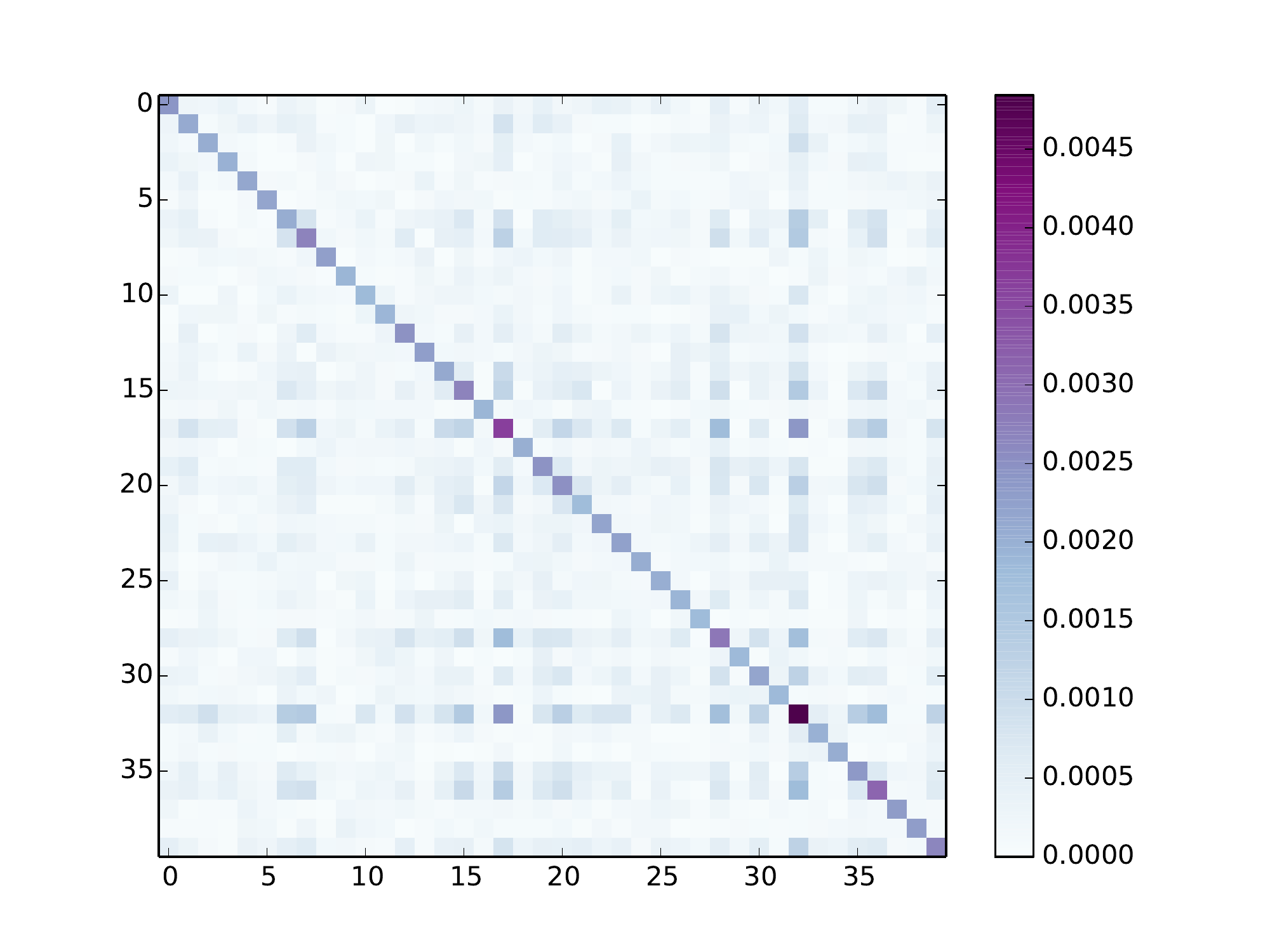}
\end{subfigure}
\begin{subfigure}[b]{0.33\textwidth}
\centering\caption{$|\overline{Q}-\widetilde{Q}|$\\ showing the error in the estimation\\ and the underlying true covariance}
\includegraphics[width=1.13\textwidth]{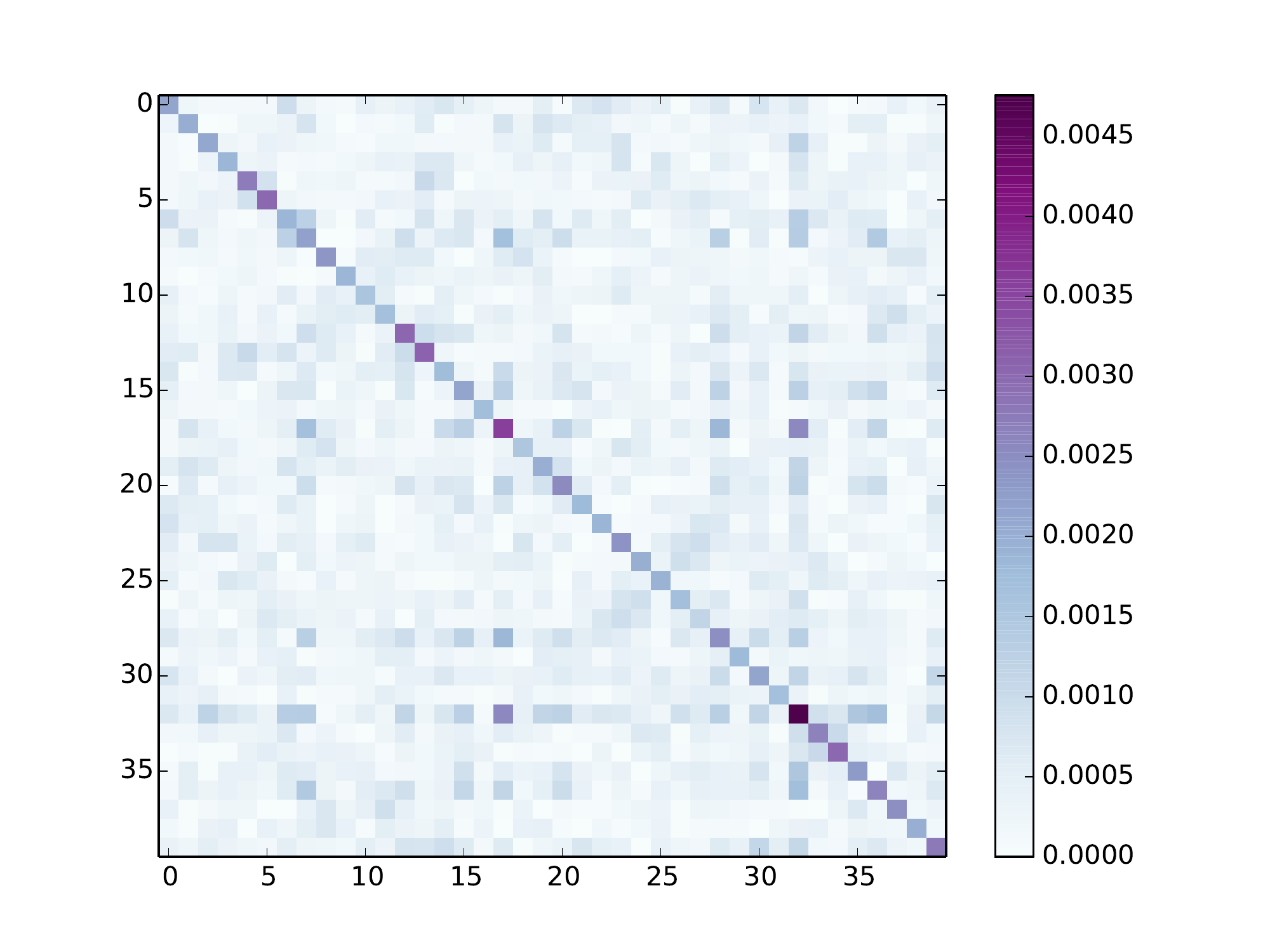}
\end{subfigure}
\caption{Errors in the model error covariance matrix $R=10^{-3}$}\label{fig:Qe_3}
\end{figure}

\section{Conclusions}
This paper has considered using the problem of estimating the moments of model error of a dynamic model. The model error is approximated by the difference of the analysis and the model forecast of the analysis at the previous timestep. Bounds for the errors in both the first and second moment of the approximated model error compared to the sample model error are derived, in terms of the error in the analysis from the truth. It is shown that to achieve the same error estimation in the second moment compared with the first, the analysis must be an order of magnitude closer to the truth.

Numerical experiments were conducted to elucidate how the quality of the analysis affects the estimation of both the mean and the covariance of the model error. It is shown numerically with the Lorenz 96 system that the estimation of the covariance of the model error is much more sensitive to the quality of the data assimilation than the estimation of the mean of the model error.

\clearpage
\bibliographystyle{apalike}

\begin{thebibliography}{}

\bibitem[Box and Draper, 1987]{Box1987}
Box, G.~E. and Draper, N.~R. (1987).
\newblock {\em Empirical model-building and response surfaces.}
\newblock John Wiley \& Sons.

\bibitem[Browne and van Leeuwen, 2015]{Browne2015a}
Browne, P. and van Leeuwen, P. (2015).
\newblock {Twin experiments with the equivalent weights particle filter and
  HadCM3}.
\newblock {\em Quarterly Journal of the Royal Meteorological Society}, 141(693
  October 2015 Part B):3399--3414.

\bibitem[Dee and Uppala, 2009]{Dee2009}
Dee, D.~P. and Uppala, S. (2009).
\newblock {Variational bias correction of satellite radiance data in the
  ERA-Interim reanalysis}.
\newblock {\em Quarterly Journal of the Royal Meteorological Society},
  135(October):1830--1841.

\bibitem[Evensen, 2007]{Evensen2007}
Evensen, G. (2007).
\newblock {\em Data assimilation}.
\newblock Springer.

\bibitem[Jazwinski, 1970]{Jazwinski1970}
Jazwinski, A.~H. (1970).
\newblock {\em Stochastic Processes and Filtering Theory}.
\newblock Academic Press.

\bibitem[Lang et~al., 2016]{Lang2016}
Lang, M., van Leeuwen, P.~J., and Browne, P. (2016).
\newblock {A systematic method of parameterisation estimation using data
  assimilation}.
\newblock {\em Tellus A}, 68:1--10.

\bibitem[Law et~al., 2015]{law2015}
Law, K., Stuart, A., and Zygalakis, K. (2015).
\newblock {\em Data Assimilation: A Mathematical Introduction}.
\newblock Texts in Applied Mathematics. Springer International Publishing.

\bibitem[Lorenz, 1996]{Lorenz1996}
Lorenz, E. (1996).
\newblock {Predictability: A problem partly solved}.
\newblock {\em Proc. Seminar on predictability}, 1(1):40--58.

\bibitem[Owhadi et~al., 2013]{Owhadi2013}
Owhadi, H., Scovel, C., Sullivan, T.~J., McKerns, M., and Ortiz, M. (2013).
\newblock {Optimal Uncertainty Quantification}.
\newblock {\em SIAM Review}, 55(2):271--345.

\bibitem[Teckentrup et~al., 2013]{Teckentrup2013}
Teckentrup, A.~L., Scheichl, R., Giles, M.~B., and Ullmann, E. (2013).
\newblock {Further analysis of multilevel Monte Carlo methods for elliptic PDEs
  with random coefficients}.
\newblock {\em Numerische Mathematik}, pages 24--29.

\bibitem[Todling, 2015]{Todling2014}
Todling, R. (2015).
\newblock {A lag-1 smoother approach to system-error estimation: sequential
  method}.
\newblock {\em Quarterly Journal of the Royal Meteorological Society},
  141(690):1502--1513.

\bibitem[Tr\'emolet, 2006]{Tremolet2006}
Tr\'emolet, Y. (2006).
\newblock {Accounting for an imperfect model in 4D-Var}.
\newblock {\em Quarterly Journal of the Royal Meteorological Society},
  132(621):2483--2504.

\bibitem[Tr\'emolet, 2007]{Tremolet2007}
Tr\'emolet, Y. (2007).
\newblock {Model-error estimation in 4D-Var}.
\newblock {\em Quarterly Journal of the Royal Meteorological Society},
  1280(July):1267--1280.

\bibitem[van Leeuwen, 2010]{VanLeeuwen2010}
van Leeuwen, P. (2010).
\newblock {Nonlinear data assimilation in geosciences: an extremely efficient
  particle filter}.
\newblock {\em Quarterly Journal of the Royal Meteorological Society},
  136(653):1991--1999.

\end{thebibliography}

\end{document}